\documentclass[a4paper, 11pt]{amsart}
\usepackage{srcltx}
\usepackage[utf8]{inputenc}
\usepackage{amsfonts, amsthm, amsmath, amssymb,bm}

\usepackage[T1]{fontenc}
\usepackage{verbatim}
\usepackage{amssymb,amscd,amsmath,amsthm}
\usepackage{mathtools}
\usepackage{indentfirst}
\usepackage{enumerate}
\usepackage[subnum]{cases}

\usepackage{fancyhdr}

\usepackage{bbm}

\addtolength{\topmargin}{-1.5cm}
\addtolength{\textheight}{2.5cm}


\theoremstyle{plain}
\newtheorem{thm}{Theorem}[section]
\newtheorem{lem}[thm]{Lemma}

\newtheorem{prop}[thm]{Proposition}




\theoremstyle{definition}

\newtheorem*{rmk}{Remark}

\usepackage{mdframed}

\newcommand{\C}{\mathbb{C}}
\newcommand{\F}{\mathbb{F}}

\newcommand{\R}{\mathbb{R}}
\newcommand{\Z}{\mathbb{Z}}

\newcommand{\rmod}{\!\!\!\!\pmod}

\newcommand{\NUM}{\operatorname{NUM}}
\newcommand{\DEN}{\operatorname{DEN}}

\newcommand{\eps}{\varepsilon}

\usepackage[square,sort,comma,numbers]{natbib}
\usepackage{graphicx}

\title[Squarefree numbers in large arithmetic progressions]{Squarefree numbers in large arithmetic progressions}

\author{Ramon M. Nunes} \thanks{This work is supported by the DFG-SNF lead agency program grant 200021L-153647.}

\address{EPFL SB MATHGEOM TAN\\ Station 8\\ CH-1015 Lausanne\\ Switzerland}
\email{ramon.moreiranunes@epfl.ch}

\usepackage{hyperref}

\begin{document}

\begin{abstract}
We show that the exponent of distribution of the sequence of squarefree numbers in arithmetic progressions of prime modulus is $\geq 2/3 + 1/57$, improving a result of Prachar from 1958. Our main tool is an upper bound for certain bilinear sums of exponential sums which resemble Kloosterman sums, going beyond what can be obtained by the Polya-Vinogradov completion method.
\end{abstract}

\keywords{arithmetic progressions, exponential sums, exponent of distribution, squarefree numbers}

\subjclass[2010]{Primary 11N37; Secondary 11L05}

\maketitle

\section{Introduction and statement of results}

%
%

%
%
%
%
%
%

\subsection{Squarefree numbers in arithmetic progressions}
Let $\mu$ denote the M\"obius function, \textit{i.e.}~$\mu$ is the multiplicative function such that for every prime number $p$ and every positive integer $\alpha$, one has,
$$
\mu(p^{\alpha})=
\begin{cases}
-1,\,\text{if }\alpha=1,\\
\;\;\;0,\,\text{otherwise}.
\end{cases}
$$
We remark that $\mu^2(n)=1$ if $n$ is squarefree and $\mu^2(n)=0$ otherwise. In this paper we are concerned with the distribution of squarefree numbers in arithmetic progressions. By the above discussion, this is equivalent to studying the distribution of the $\mu^2$ function in arithmetic progressions.

In this direction, a result of Prachar \citep{prachar1958kleinste}, subsequently improved by Hooley \citep{hooley1975note} says that
\begin{equation}\label{Hoo-sqf}
\sum_{\substack{n\leq x\\n \equiv a\rmod q}}\mu^2(n) = \frac{1}{\varphi(q)}\sum_{\substack{n\leq x\\(n,q)=1}}\mu^2(n) + O\left(\frac{X^{1/2}}{q^{1/2}} + q^{1/2+\epsilon}\right).
\end{equation}
It follows from Asymptotic formula \eqref{Hoo-sqf} that the sequence of squarefree numbers $\leq X$ is well distributed in arithmetic progressions modulo $q$ whenever
\begin{equation}\label{q<X23}
q\leq X^{2/3-\epsilon},
\end{equation}
for some fixed positive $\epsilon$. Even though it is largely believed that one should be able to replace $2/3$ by $1$, this constant has resisted any improvement since Prachar \citep{prachar1958kleinste}.

In \citep{nunes2015conjectures}, we were able to show a slight improvement, meaning that we proved that one can replace \eqref{q<X23} by $q\leq X^{2/3}(\log X)^{\delta}$, where $\delta$ is some small (but fixed) constant. The technique there was based on non-trivial upper bounds for exponential sums by Bourgain and Garaev. These upper bounds show cancellation in very short sums but the upper bound is only better than the trivial by some small power of the logarithm of the length of the sum, this is the reason for the rather modest improvement in \citep{nunes2015conjectures}.

Our main result proves that one can replace $2/3$ by $13/19 = 2/3 + 1/57$ in \eqref{q<X23}. Precisely, we have
\begin{thm}\label{2/3}
Let $\epsilon>0$ and $A >0$. Then, uniformly for $X\geq 2$, integers $a$ and prime numbers $q$ coprime with $a$ satisfying
$$
q\leq X^{\frac{13}{19}-\epsilon},
$$
we have
$$
\sum_{\substack{n\leq X\\n \equiv a\rmod q}}\mu^2(n) = \frac{1}{\varphi(q)}\sum_{\substack{n\leq X\\(n,q)=1}}\mu^2(n) + O\left(\frac{X}{q(\log X)^A}\right),
$$
In other terms, the value $\Theta=\frac{13}{19}$ is an exponent of distribution for the characteristic function of the sequence of squarefree numbers $\mu^2$ restricted to prime moduli.
\end{thm}

We believe it is helpful to compare this result with \citep[Theorem 1.1]{fouvry2015exponent} on the level of distribution of the ternary divisor function. In \citep{fouvry2015exponent}, one can see that Poisson summation and a straightforward application of the Deligne  bound for two-dimensional Kloosterman sums would already give that the ternary divisor function on integers up to $X$ is well distributed in arithmetic progressions modulo $q\leq X^{1/2-\epsilon}$. Improving the constant $1/2$ requires a way to get further cancellation than what comes from the Deligne bound and this is done by means of estimates of bilinear sums of Kloosterman sums.

In our case one sees that using only the Weil bound \eqref{Weil}, one can retrieve Hooley's result \eqref{Hoo-sqf} and again the way to get further cancellation is by means of estimates for sums of exponential sums. In the present case, the estimate needed is exactly that of Theorem \ref{expsum1} below.

In the following we discuss these sums of exponential sums from a general perspective before specializing to our case the case that interest us here.

\subsection{Sums of exponential sums}

Upper bounds for exponential sums play a major role in modern analytic number theory. The classical Weil bound for one-variable exponential sums states that for any prime number $q$, and any rational function $f\in \Z(X)$ satisfying some mild conditions, we have the upper bound
\begin{equation}\label{WeilKloo}
\sideset{}{^{\ast}}\sum_{x\rmod q}e_q(f(x))\ll q^{1/2},
\end{equation}
where the implied constant depends only on the number of roots and poles of $f$. Throughout the article, $e_q(x):=e^{2i\pi x/q}$, the $\ast$ means that we only sum over the $x$ that are not poles of $f$, and finally, $\bar x$ denotes the multiplicative inverse of $x$ modulo $q$.

A much deeper result of Deligne provides similar upper bounds for sums in several variables. Many problems in analytic number theory are reduced to obtaining estimates for exponential sums that follow directly from the Weil or the Deligne bound. However, in some problems, a straightforward application of these fails to give the desired result. One way of getting by is to take advantage of some extra summation that may be offered by the problem.

This is at the heart of a recent series of papers by Fouvry, Kowalski and Michel (\citep{fouvry2014algebraic},  \citep{fouvry2015algebraic}, \citep{fouvry2015exponent}, etc.). For instance, in \citep{fouvry2014algebraic} they prove upper bounds for sums such as
\begin{equation}\label{fkm1}
\sum_{M/2<m\leq M}\sum_{N/2<n\leq N}K(mn),
\end{equation}
where $K$ is a general \textit{algebraic trace function of bounded conductor} (see \citep{fouvry2014algebraic} for a precise statement and some examples). For instance, their result applies for hyper-Kloosterman sums, i.e. for $K(t)=\operatorname{Kl}_k$, where
\begin{equation}\label{h-kloo}
\operatorname{Kl}_k(t):=
q^{-\frac{k-1}{2}}\underset{u_1\cdots u_k=t}{\sum\ldots\sum}e_q(u_1+\ldots+u_k),\text{ if }t\neq 0.
\end{equation}
We remark that the Deligne bound $|\operatorname{Kl}_k(t)|\leq k$ is already highly non-trivial and the upper bound from \citep{fouvry2014algebraic} is saying the we can get even further cancellation when averaging as in \eqref{fkm1}. We also mention that their results apply for functions such as $K_1(t)$ and $K_2(t)$ in definition \eqref{KK} below.

The upper bounds in \citep{fouvry2014algebraic} are non-trivial as soon as $MN\geq q^{3/4+\epsilon}$. In particular one can take $M=N=q^{\theta}$ with $\theta<1/2$. This is an important threshold, since in general, a much simpler method, using orthogonality of characters can give non trivial upper bounds by only taking advantage of one of the sums. This method is usually called the completion method. See \citep{fouvry2015short} for discussions on this method and for some examples where one can go beyond this threshold for one-dimensional sums.

Sometimes one even needs to consider more general bilinear sums:
\begin{equation}\label{kms}
\sum_{m}\sum_{n}\alpha_m\beta_nK(mn),
\end{equation}
where $\bm{\alpha}=(\alpha_m)_m$ and $\bm{\beta}=(\beta_n)_n$ are sequences of complex numbers supported in $[M/2,M]$ and $[N/2,N]$ respectively. Note that the sum in \eqref{fkm1} correspond to the sequences $\alpha_m=\bm{1}_{[M/2,M]}$ and $\beta_n=\bm{1}_{[N/2,N]}$, where, for $A\subset \R$, $\bm{1}_A$ denotes its characteristic function.

In \citep{blomer2014moments} and \citep{kowalski2015bilinear}, sums such as those in \eqref{kms} are studied in the case where $K(t)$ is a hyper-Kloosterman sum.
In this paper, we are led to study the following type of bilinear sums:
\begin{equation}\label{smooth-mn2}
\sum_{M/2<m\leq M}\sum_{N/2<n\leq N}K(mn^2).
\end{equation}
Here, again, our interest lies in ranges where $M,N\leq q^{\theta}$ for some $\theta<1/2$.

Notice that the sums in \eqref{smooth-mn2}, like those in \eqref{fkm1}, are \textit{smooth}, meaning that there are no annoying terms $\alpha_m$ or $\beta_n$. It is natural to think that the techniques of \citep{fouvry2014algebraic} could be adapted to our situation. Unfortunately this is not the case, at least not in a straightforward manner. The technique in \citep{fouvry2014algebraic} uses the spectral theory of modular forms and the fact that the divisor function
$$
d(t):=\sum_{mn=t}1
$$
has an interpretation in terms of Fourier coefficients of certain Eisenstein series. Due to the lack of intepretation in terms of modular forms for the function $d_{1,2}(t):=\sum_{mn^2=t}1$, we are not able to transpose the methods of \citep{fouvry2014algebraic} to our case. Instead we will follow the methods in \citep[Section 5]{blomer2014moments}, which are in turn inspired by those of \citep{fouvry1998certaines}.

We are now ready to state our main estimate on sums of exponential sums, but first we must define the $K$-functions in which we are interested. For a prime number $q$ and integers $m$ and $n$, we let
\begin{equation}\label{Smnq}
S(m,n;q):=\sideset{}{{}^{\ast}}\sum_{u\rmod q}e_q(m{\bar u}^2+nu).
\end{equation}
If $m$ is coprime with $q$, we have the Weil bound:
\begin{equation}\label{Weil}
S(m,n;q)\leq 3q^{1/2}.
\end{equation}
For a fixed prime number $q$ and integers $a$ and $b$ coprime to $q$, we introduce the normalized sums
\begin{equation}\label{KK}
K_1(t):=q^{-1/2}S(a,bt;q)\text{ and }K_2(t):=q^{-1/2}S(at,b;q),
\end{equation}
where $S(m,n;q)$ is as in \eqref{Smnq}. As far as the notation is concerned, we forget about the depedency on $a$ and $b$ and $q$.

We prove the following:
\begin{thm}\label{expsum1}
Let $q$ be a prime number. Let $M, N\geq 1$ be such that
$$
1\leq M\leq N^2,\,\,N<q,\,\,MN^2< q^2.
$$
Let ${\bm{\alpha}}=(\alpha_m)_{m\leq M}$ be a sequence of complex numbers bounded by 1, and let $\mathcal{N}\subset [1,q-1]$ be an interval of length $N$. Finally, let $a$ and $b$ be coprime with $q$ and let $K_2(t)$ be give by \eqref{KK}. Then for any $\epsilon>0$, we have

$$
\sum_{m\leq M}\sum_{n\in \mathcal{N}}\alpha_mK_2(mn^2)\ll q^{\epsilon}\|\bm{\alpha}\|_1^{1/2}\|\bm{\alpha}\|_2^{1/2}M^{1/4}N\left(\frac{M^3N^6}{q^4}\right)^{-1/16},
$$
where the implied constant depends on $\epsilon$, and where 
$$
\|\bm{\alpha}\|_1=\sum_{m}|\alpha_m|\text{ and }\|\bm{\alpha}\|_2=\left(\sum_{m}|\alpha_m|\right)^{1/2}.
$$
\end{thm}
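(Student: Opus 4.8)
The plan is to bound the bilinear sum in \eqref{smooth-mn2} by exploiting the extra summation over $n$ through a Cauchy--Schwarz argument combined with completion, following the strategy of \citep[Section 5]{blomer2014moments} and \citep{fouvry1998certaines}. First I would apply the Cauchy--Schwarz inequality in the variable $m$ to remove the coefficients $\alpha_m$ and open one of the two copies of the $n$-sum, arriving at an expression of the shape
$$
\left|\sum_{m\leq M}\sum_{n\in\mathcal{N}}\alpha_mK_2(mn^2)\right|^2\ll \|\bm{\alpha}\|_2^2\sum_{m\leq M}\sum_{n_1,n_2\in\mathcal{N}}K_2(mn_1^2)\overline{K_2(mn_2^2)}.
$$
The goal is to gain from averaging the product $K_2(mn_1^2)\overline{K_2(mn_2^2)}$ over the long variable $m$. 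Since $K_2(t)=q^{-1/2}S(at,b;q)$ encodes the exponential sum $\sum_{u}^{\ast}e_q(atu^2+b\bar u)$, wait---one must be careful about which variable sits where in \eqref{Smnq}; here $K_2(mn^2)$ involves $e_q(amn^2\bar u^2+bu)$, so the dependence on $m$ is linear in the phase, which is exactly what makes the $m$-average tractable.

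Next I would complete the sum over $m$ using orthogonality of additive characters (the Poisson/completion step), replacing the range $m\leq M$ by a full residue sum modulo $q$ at the cost of a factor $M/q$ together with a sum over a dual variable $h$ weighted by the usual completion kernel. After completion, the inner arithmetic object becomes, for each $h$ and each pair $(n_1,n_2)$, a complete sum over $u_1,u_2,m$ modulo $q$ of the form
$$
\sideset{}{^{\ast}}\sum_{u_1,u_2}\sum_{m\rmod q}e_q\!\left(am(n_1^2\bar u_1^2-n_2^2\bar u_2^2)+b(u_1-u_2)+hm\right),
$$
and the innermost $m$-summation forces the congruence $an_1^2\bar u_1^2-an_2^2\bar u_2^2+h\equiv 0$. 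This collapses the expression into a one-parameter family of exponential sums whose cancellation I would control by the Weil bound \eqref{Weil} (or its two-variable analogue via Deligne). The diagonal contribution, coming from $n_1=n_2$ (or more precisely from the degenerate solutions of the resulting equation), must be separated and estimated trivially; it is this diagonal that produces the main term in the final bound and dictates the shape of the exponent.

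The heart of the argument, and the step I expect to be the main obstacle, is the off-diagonal estimate: showing genuine square-root cancellation in the completed sum uniformly in $h$ and in the pair $(n_1,n_2)$, and verifying that the relevant algebraic variety is nonsingular (so that Deligne's bound applies with a bounded implied constant) outside a controlled set of degenerate $(n_1,n_2)$. One delicate point is that the substitution $u\mapsto \bar u^2$ and the presence of $n^2$ rather than $n$ create extra solutions to the governing congruence---typically the relation $n_1\bar u_1=\pm n_2\bar u_2$---so I must carefully count these and check that their contribution is absorbed into the stated exponent. Finally, I would optimize the free length parameter introduced by completion and collect the diagonal term $\|\bm{\alpha}\|_2^2 M N^2 q$-type contribution against the off-diagonal term $\|\bm{\alpha}\|_2^2 MN^2(MN^2)$-type contribution; balancing these, taking a square root, and bookkeeping the $\|\bm{\alpha}\|_1$ versus $\|\bm{\alpha}\|_2$ weights through Hölder should reproduce the factor $\|\bm{\alpha}\|_1^{1/2}\|\bm{\alpha}\|_2^{1/2}M^{1/4}N$ and the saving $(M^3N^6/q^4)^{-1/16}$, the sixteenth root reflecting the two successive Cauchy--Schwarz/completion steps needed to reach a range with $M,N\leq q^{\theta}$ for $\theta<1/2$.
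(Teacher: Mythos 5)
There is a genuine gap here: the engine of your proof --- Cauchy--Schwarz in $m$ followed by completion of the $m$-sum --- is precisely the P\'olya--Vinogradov completion method, and it cannot prove Theorem \ref{expsum1} in the range where the theorem has content. Carry your plan out quantitatively: after Cauchy--Schwarz, the diagonal pairs $n_1\equiv\pm n_2\rmod q$ contribute $\ll\|\bm{\alpha}\|_2^2MN$ (each pair bounded trivially via \eqref{Weil}), while for off-diagonal pairs the completed twisted sums
$$
\sum_{m\rmod q}K_2(mn_1^2)\overline{K_2(mn_2^2)}e_q(hm)
$$
are $\ll q^{1/2}$, being sums along the curve $an_1^2\bar u_1^2-an_2^2\bar u_2^2+h\equiv 0$; hence completing a sum of length $M$ costs $\ll q^{1/2+\epsilon}$ per pair, and in total
$$
|S|^2\ll\|\bm{\alpha}\|_2^2\left(MN+N^2q^{1/2+\epsilon}\right),\quad\text{i.e.}\quad S\ll MN^{1/2}+M^{1/2}Nq^{1/4+\epsilon}
$$
for bounded coefficients. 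The second term is at least the trivial bound \eqref{onlyWeil}, namely $MN$, as soon as $M\leq q^{1/2}$. But the entire point of Theorem \ref{expsum1}, stressed in the introduction, is to be non-trivial for $M=N=q^{\theta}$ with $4/9<\theta<1/2$, strictly below the completion threshold; in that range your argument returns nothing, and no bookkeeping can make the saving $(M^3N^6/q^4)^{-1/16}$ appear. Your closing sentence about ``two successive Cauchy--Schwarz/completion steps'' is exactly where the real work would have to happen, but it is never described, and iterating Cauchy--Schwarz in the same variables does not by itself create the structure needed to break the $q^{1/2}$ barrier.

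The paper's route is structurally different. It applies Vinogradov's shift $n\mapsto n+ab$ with auxiliary parameters $a\sim A$, $b\sim B$, $AB\leq N$, and changes variables $r\equiv\bar{a}n$, $s\equiv a^jm \rmod q$; the key gain is that $r$ then ranges over \emph{all} residue classes modulo $q$, so one of the two variables is already complete. H\"older's inequality (not Cauchy--Schwarz) in \eqref{Holder} produces a fourth moment over the shifts $b\sim B$, and only the short variable $s\leq A^jM$ needs to be completed. The resulting complete two-variable sums $\Sigma_j(K_j,\bm{b},h)$ satisfy the square-root--cancellation bound $\ll q$ for all quadruples $\bm{b}$ outside a set $\mathcal{B}^{\Delta}$ of size $O(B^2)$: this is Proposition \ref{completesum}, the deep input of the proof, and it rests on Hooley's theorem (Lemma \ref{Hoo}) combined with the algebraic lemmas of Section \ref{algebraic} (culminating in Lemma \ref{UV-sq}), which guarantee that the varieties $W_{\lambda}(t)$ in \eqref{W(t)} are generically irreducible curves. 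Moreover, the case $K=K_2$, $j=2$ relevant to Theorem \ref{expsum1} is not treated directly but reduced to the case $K=K_1$, $j=1$ by a quadratic change of variables (with a separate treatment of $h=0$ using a non-residue $\xi$). Finally, optimizing $A$ and $B$ as in \eqref{choices} is what produces the exponent $-1/16$. None of these ingredients --- the shift, the manufactured complete variable $r$, the fourth moment, the irreducibility statements --- appears in your proposal, so the gap is one of method, not of detail.
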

This can be thought of as an inhomogeneous version of \citep[Inequality (5.3)]{blomer2014moments} or \citep[Theorem 1.3]{kowalski2015bilinear}, where $K(mn)$ is replaced $K(mn^2)$.

The proof of Theorem \ref{expsum1} will be intertwined with that of
\begin{thm}\label{expsum2}
Let $q$ be a prime number. Let $M, N\geq 1$ be such that

$$
1\leq M\leq N^2,\,\,N<q,\,\,MN< q^{3/2}.
$$
Let ${\bm{\alpha}}=(\alpha_m)_{m\leq M}$ be a sequence of complex numbers bounded by 1, and let $\mathcal{N}\subset [1,q-1]$ be an interval of length $N$. Finally, let $a$ and $b$ be coprime with $q$ and let $K_1(t)$ be give by \eqref{KK}. Then for any $\epsilon>0$, we have
$$
\sum_{m\leq M}\sum_{n\in \mathcal{N}}\alpha_mK_1(mn)\ll q^{\epsilon}\|\bm{\alpha}\|_1^{1/2}\|\bm{\alpha}\|_2^{1/2}M^{1/4}N\left(\frac{M^2N^5}{q^3}\right)^{-1/12},
$$
where the implied constant depends on $\epsilon$.
\end{thm}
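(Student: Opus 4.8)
The plan is to reduce the bilinear sum to a fourth moment in the smooth variable by two successive applications of Cauchy--Schwarz, and then to evaluate the resulting complete exponential sums. Writing $\mathcal{S}=\sum_{m\le M}\alpha_m W_m$ with $W_m:=\sum_{n\in\mathcal{N}}K_1(mn)$, I would first bound $|\mathcal{S}|\le\sum_m|\alpha_m|\,|W_m|$ and then split $|\alpha_m|=|\alpha_m|^{1/2}\cdot|\alpha_m|^{1/2}$, so that Cauchy--Schwarz gives $|\mathcal{S}|\le\|\bm{\alpha}\|_1^{1/2}(\sum_m|\alpha_m|\,|W_m|^2)^{1/2}$. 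A second Cauchy--Schwarz on the weighted sum, $\sum_m|\alpha_m|\,|W_m|^2\le\|\bm{\alpha}\|_2\,\Sigma_4^{1/2}$ with $\Sigma_4:=\sum_{m\le M}|W_m|^4$, yields $|\mathcal{S}|\le\|\bm{\alpha}\|_1^{1/2}\|\bm{\alpha}\|_2^{1/2}\,\Sigma_4^{1/4}$. This already accounts for the precise factor $\|\bm{\alpha}\|_1^{1/2}\|\bm{\alpha}\|_2^{1/2}$ in the statement and reduces everything to proving $\Sigma_4\ll q^{\epsilon}MN^4(M^2N^5/q^3)^{-1/3}$; note that the trivial bound $|K_1|\le 3$ from \eqref{Weil} already gives this whenever $M^2N^5\le q^3$, so only the complementary range requires work.

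Next I would open the fourth power and use that, for fixed $n$, the summand $K_1(mn)$ depends on $m$ only modulo $q$, so $|W_m|^4$ is $q$-periodic in $m$. Completing the sum over $m\le M$ by additive characters expresses $\Sigma_4$ as $\sum_{h\bmod q}E(h)\,q^{-1}\sum_{\bm{n}}T_h(\bm{n})$, where $E(h)=\sum_{m\le M}e_q(hm)$ satisfies $|E(h)|\ll\min(M,\,q/|h|)$ for $|h|\le q/2$, and $T_h(\bm{n})=\sum_{m\bmod q}K_1(mn_1)\overline{K_1(mn_2)}K_1(mn_3)\overline{K_1(mn_4)}e_q(-hm)$. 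Inserting the definitions \eqref{Smnq}, \eqref{KK} and carrying out the $m$-summation imposes the linear relation $n_1u_1-n_2u_2+n_3u_3-n_4u_4\equiv \bar b h\pmod q$ among the four inner variables, while leaving the quadratic phase $e_q(a(\bar u_1^2-\bar u_2^2+\bar u_3^2-\bar u_4^2))$. The key point is that these are complete sums of Gauss/Sali\'e type: on a single conjugate pair the correlation collapses to a Gauss sum in the argument $1-(n_i/n_j)^2$, which is $\asymp 1$ on the diagonal $n_i\equiv\pm n_j$ and $O(q^{-1/2})$ otherwise, while the genuinely multidimensional $T_h(\bm{n})$ enjoy square-root cancellation by \eqref{Weil} (and the Deligne bound) away from an explicit degenerate locus.

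The main term then comes from $h=0$ together with the diagonal configurations of $\bm{n}$ in which the four indices pair off up to sign; there are $\asymp N^2$ such quadruples, each contributing $\asymp q$ to $q^{-1}\sum_{\bm{n}}T_0(\bm{n})$, so this produces $\asymp MN^2$, which is comfortably within the target in view of $MN<q^{3/2}$. Everything therefore rests on the off-diagonal quadruples together with the frequencies $h\ne 0$.

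The hard part will be estimating this off-diagonal contribution. Bounding each $T_h(\bm{n})$ individually by $q^{1/2}$ and summing trivially over $\bm{n}$ and $h$ --- which is exactly the P\'olya--Vinogradov completion method --- is too wasteful and fails to reach the target in the range $q^{9/2}<M^2N^5<q^6$. One must instead extract genuine cancellation from the summation over $\bm{n}$ (and over $h$), rather than from the individual complete sums. This is where the quadratic dependence on $n$ is decisive: after the $m$-completion the residual average over $\bm{n}$ reorganizes into a bilinear sum whose exponential sum is evaluated at an argument quadratic in the surviving $n$-variable, i.e.\ precisely a sum of the type $\sum_m\sum_n\alpha_m K_2(mn^2)$ governed by Theorem \ref{expsum1}; conversely the analogous step for Theorem \ref{expsum1} produces sums of $K_1(mn)$ type, so the two estimates must be run simultaneously, the savings $1/12$ and $1/16$ arising as the fixed point of this bootstrap. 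The principal technical obstacles are thus the exact identification and counting of the degenerate loci on which the Weil/Deligne square-root cancellation breaks down, and the careful balancing of the two interlocked estimates so that all error terms close up within the stated exponents.
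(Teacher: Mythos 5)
Your opening reduction is fine: the two Cauchy--Schwarz steps giving $|\mathcal{S}|\le\|\bm{\alpha}\|_1^{1/2}\|\bm{\alpha}\|_2^{1/2}\Sigma_4^{1/4}$ are correct, and so is your diagnosis that completing the $m$-sum and invoking square-root cancellation for the complete correlations $T_h(\bm{n})$ cannot reach the stated exponent. In fact the failure is worse than you say: the off-diagonal contribution is $\approx q^{\epsilon}N^4q^{1/2}$, which exceeds the target $q^{\epsilon}M^{1/3}N^{7/3}q$ as soon as $N^5>Mq^{3/2}$; for $M=N$ this means failure for all $N>q^{3/8}$, which covers the \emph{entire} range $N>q^{3/7}$ in which the theorem is non-trivial, so your scheme gains nothing there. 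The genuine gap is that your proposed remedy for this is not an argument: you suggest that the off-diagonal terms ``reorganize'' into bilinear sums of $K_2(mn^2)$ type to be handled by Theorem \ref{expsum1}, while Theorem \ref{expsum1} is in turn to be proved via sums of $K_1(mn)$ type, the exponents $1/12$ and $1/16$ emerging as ``the fixed point of this bootstrap.'' As stated this is circular --- each theorem is invoked at full strength inside the proof of the other, with no induction, no iteration with a quantified gain, and no a priori inequality to break the circle --- and you give no derivation of how $\sum_{\bm{n}}T_h(\bm{n})$ actually becomes a bilinear sum with bounded coefficients lying in the admissible ranges of Theorem \ref{expsum1}. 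A further (smaller) gap: the generic bound $T_h(\bm{n})\ll q^{1/2}$ does not follow from \eqref{Weil} or ``the Deligne bound'' as cited; it is a correlation statement about products of four trace functions and needs its own algebraic-geometric input, comparable in difficulty to what it is supposed to replace.

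The idea you are missing is that the paper creates the bilinear structure \emph{before} any Cauchy--Schwarz, via Vinogradov's shift: writing $K_1(m(n+ab))=K_1(am(\bar a n+b))$ for auxiliary parameters $a\sim A$, $b\sim B$ with $AB\le N$, $AM<q$, then applying H\"older in the variables $(r,s)\equiv(\bar a n, am)$ (with the multiplicity function $\nu(r,s)$, whose first and second moments produce exactly the factor $\|\bm{\alpha}\|_1^{1/2}\|\bm{\alpha}\|_2^{1/2}$). The resulting fourth moment is over the \emph{shift} variable $b\sim B$, not over $\bm{n}$; expanding it and completing only the $s$-sum leads, outside a diagonal set $\mathcal{B}^{\Delta}$ of size $O(B^2)$, to complete \emph{two-variable} sums over $(r,s)\in\F_q^2$, for which Proposition \ref{completesum} proves the bound $\Sigma_1(K_1,\bm{b},h)\ll q$ --- square-root cancellation in both variables, as opposed to the single factor $q^{1/2}$ your one-variable completion can ever extract. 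That bound is the real content of the proof and rests on Hooley's theorem (Lemma \ref{Hoo}) together with the algebraic lemmas of Section \ref{algebraic} (culminating in Lemma \ref{UV-sq}), which guarantee that the relevant rational function is not composed, so that the fibers $W_{\lambda}(t)$ are generically irreducible curves. The exponent $-1/12$ then comes from optimizing the free parameters as in \eqref{choices}, not from any fixed point. Incidentally, the two theorems are ``intertwined'' in the paper only at the level of these complete sums --- the $j=2$ variety is reduced by a change of variables to the $j=1$ variety --- and neither bilinear estimate is ever used in the proof of the other, so no circularity arises there.
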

Notice that this is exactly \citep[Theorem 1.3.]{kowalski2015bilinear} for our modified Kloosterman sum in \eqref{KK}.

To appreciate the strength of Theorems \ref{expsum1} and \ref{expsum2}, let us assume $\alpha_m=1$ for every $m\leq M$. In this case, the bound $\ll MN$ follows directly from \eqref{Weil}. The upper bound from Theorem \ref{expsum1} (respectively \ref{expsum2}) improves on this bound, for instance, when $M=N=q^{\theta}$ with $\theta>4/9$ (respectively $\theta>3/7$). The remarkable feature is that both $4/9$ and $3/7$ are smaller than $1/2$, meaning that our methods go beyond what can be obtained by the completion method.

\section*{Structure of the article}

In the next section we make some algebraic considerations that will be useful when verifying the necessary conditions to apply a result of Hooley (see Lemma \ref{Hoo} below). These results are mostly about when certain rational functions can be written as the square of another rational function. These considerations are a bit tedious but rather elementary and are mainly based in the partial fractional decomposition for rational functions.

The third section is dedicated to bounding bilinear sums. In particular, we prove Theorems \ref{expsum1} and \ref{expsum2}. Our approach is inspired by those in \citep{fouvry1998certaines} and \citep{blomer2014moments}. Indeed, the argument in \citep{blomer2014moments} adapts here almost straightforwardly. The only extra difficulty that comes up is that in our case we need to guarantee that certain rational functions are not squares, at which point we recur to the results from Section \ref{algebraic}.

Finally, Section \ref{proofof23} is dedicated to the proof of Theorem \ref{2/3}. The main ingredient here is, as we mentioned, Theorem \ref{expsum1}, but before we can use it, some preparation is necessary. The first thing we need is a bilinear structure for $\mu^2$. This is given by the classical formula \eqref{mu-decomp}. It turns out that the term $\mu(n_2)$ plays no role in studying the problem in Theorem \ref{2/3}, which is what allows for an application of Poisson summation in both variables. Finally, we conclude by applying Theorem \ref{expsum1}.

\section{Algebraic considerations}\label{algebraic}

Let $q$ be an odd prime number and let $\F_q$ be a finite field with $q$ elements that we identify with $\Z/q\Z$ whenever is convenient. Finally, we fix $\overline{\F_q}$ an algebraic closure of $\F_q$.

The next three lemmas investigate when certain rational functions are squares. The first two are simple and follow almost directly by partial fraction decomposition. The third one is a bit more involved and will be deduced from the previous ones.

\begin{lem}\label{AB-sq}
Let $A,B,\rho_1,\rho_2\in\overline{\F_q}$ be such that $A$ and $B$ are non-zero and $\rho_1$ and $\rho_2$ are distinct. Then the rational function
$$
1+\frac{A}{(X-\rho_1)^2}+\frac{B}{(X-\rho_2)^2}
$$
is a square if and only if $A=B=(\rho_1-\rho_2)^2$.

\end{lem}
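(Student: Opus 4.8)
The proof splits into the two implications, and the substantive work lies in the forward (``only if'') direction, which I would handle through the partial fraction decomposition of a putative square root. For the converse (``if'') direction, I would simply set $d := \rho_1-\rho_2$ and verify by a direct computation that
\[
1+\frac{d^2}{(X-\rho_1)^2}+\frac{d^2}{(X-\rho_2)^2}=\left(1+\frac{d^2}{(X-\rho_1)(X-\rho_2)}\right)^2 .
\]
After clearing denominators this reduces to the identity $(X-\rho_2)^2+(X-\rho_1)^2-2(X-\rho_1)(X-\rho_2)=(\rho_1-\rho_2)^2$, i.e. $\bigl((X-\rho_1)-(X-\rho_2)\bigr)^2=(\rho_1-\rho_2)^2$, which is immediate, so the sufficiency is a routine check.

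For the (``only if'') direction, suppose the function, call it $f$, equals $g^2$ for some $g\in\overline{\F_q}(X)$. I would first pin down the pole structure of $f$. Writing $f$ over the common denominator $(X-\rho_1)^2(X-\rho_2)^2$, the numerator evaluated at $\rho_1$ equals $A(\rho_1-\rho_2)^2\neq 0$ and at $\rho_2$ equals $B(\rho_1-\rho_2)^2\neq 0$, since $A,B$ are nonzero and $\rho_1\neq\rho_2$. Hence $f$ is already in lowest terms, with a genuine double pole at each of $\rho_1,\rho_2$ and no other finite pole, while $f(\infty)=1$. Comparing lowest-terms denominators in $f=g^2$ then forces the denominator of $g$ to be a scalar multiple of $(X-\rho_1)(X-\rho_2)$, so $g$ has a simple pole at each $\rho_i$, no other finite pole, and is finite at infinity. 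Its partial fraction expansion therefore takes the form
\[
g(X)=\gamma_0+\frac{\gamma_1}{X-\rho_1}+\frac{\gamma_2}{X-\rho_2},
\]
with $\gamma_0^2=1$ (from $g(\infty)^2=f(\infty)=1$) and $\gamma_1,\gamma_2\neq 0$ (since the poles of $f$ are genuine).

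Finally, I would square this expression, rewrite the cross term via $\frac{1}{(X-\rho_1)(X-\rho_2)}=\frac{1}{\rho_1-\rho_2}\bigl(\frac{1}{X-\rho_1}-\frac{1}{X-\rho_2}\bigr)$, and compare with the partial fraction expansion of $f$. Uniqueness of partial fractions gives $\gamma_1^2=A$ and $\gamma_2^2=B$, together with the vanishing of the two residues:
\[
\gamma_1\Bigl(\gamma_0+\tfrac{\gamma_2}{\rho_1-\rho_2}\Bigr)=0,\qquad \gamma_2\Bigl(\gamma_0-\tfrac{\gamma_1}{\rho_1-\rho_2}\Bigr)=0 .
\]
Since $\gamma_1,\gamma_2\neq 0$, this forces $\gamma_1=\gamma_0(\rho_1-\rho_2)$ and $\gamma_2=-\gamma_0(\rho_1-\rho_2)$, whence $A=\gamma_1^2=(\rho_1-\rho_2)^2$ and $B=\gamma_2^2=(\rho_1-\rho_2)^2$, using $\gamma_0^2=1$. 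The only genuine obstacle is the reduction to the three-term shape of $g$: one must justify that a square root in $\overline{\F_q}(X)$ of a function with double poles at $\rho_1,\rho_2$ has exactly simple poles there and introduces no spurious poles, which is exactly the lowest-terms denominator comparison above. Everything afterward is bookkeeping of residues, legitimate in odd characteristic where $2$ is invertible.
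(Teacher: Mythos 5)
Your proof is correct and follows essentially the same route as the paper: an explicit square root for the ``if'' direction (your $1+\frac{(\rho_1-\rho_2)^2}{(X-\rho_1)(X-\rho_2)}$ is literally the same rational function as the paper's $1+\frac{\rho_1-\rho_2}{X-\rho_1}-\frac{\rho_1-\rho_2}{X-\rho_2}$), and, for the ``only if'' direction, a partial-fraction expansion of the putative square root $g$, squaring via the identity $\frac{1}{(X-\rho_1)(X-\rho_2)}=\frac{1}{\rho_1-\rho_2}\bigl(\frac{1}{X-\rho_1}-\frac{1}{X-\rho_2}\bigr)$, and comparison of coefficients forcing the residue terms to vanish. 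Your lowest-terms argument pinning down the pole structure of $g$ simply makes explicit what the paper dismisses as ``not difficult to see,'' so the substance is identical.
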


\begin{proof}
We start by noticing that

$$
\left(1+\frac{(\rho_1-\rho_2)}{(X-\rho_1)}-\frac{(\rho_1-\rho_2)}{(X-\rho_2)}\right)^2=1+\frac{(\rho_1-\rho_2)^2}{(X-\rho_1)^2}+\frac{(\rho_1-\rho_2)^2}{(X-\rho_2)}.
$$

On the other hand, suppose there exists $g(X)\in\overline{\F_q}(X)$ such that
\begin{equation}\label{AB=g2}
1+\frac{A}{(X-\rho_1)^2}+\frac{B}{(X-\rho_2)^2}=g(X)^2.
\end{equation}
We consider the partial fraction decomposition of $g(X)$. It is not difficult to see that the polynomial part of $g(X)$ must be constant and that $\rho_1$ and $\rho_2$ are the only poles of $g(X)$ and both are simple. In other words, we have
$$
g(X)=c_0 +\frac{c_1}{X-\rho_2}+\frac{c_2}{X-\rho_2},
$$
for some $c_0,\,c_1,\,c_2\in \overline{\F_q}$. Using the identity
\begin{equation}\label{easy-id}
\frac{1}{(X-\rho_1)(X-\rho_2)}=\frac{1}{\rho_1-\rho_2}\left(\frac{1}{X-\rho_1}-\frac{1}{X-\rho_2}\right),
\end{equation}
we see that
\begin{multline*}
g(X)^2=c_0^2+\frac{c_1^2}{(X-\rho_1)^2}+\frac{c_2^2}{(X-\rho_2)^2}+2c_1\left(c_0+\frac{c_2}{\rho_1-\rho_2}\right)\frac{1}{X-\rho_1}\\+2c_2\left(c_0-\frac{c_1}{\rho_1-\rho_2}\right)\frac{1}{X-\rho_2}.
\end{multline*}
Comparing it to \eqref{AB=g2}, we see that we must have
\begin{equation}\label{squaresAB}
c_0^2=1,\,c_1^2=A,\,c_2^2=B
\end{equation}
In particular $c_1,c_2\neq 0$. Furthermore,
$$
c_1\left(c_0+\frac{c_2}{(\rho_1-\rho_2)}\right)=c_2\left(c_0-\frac{c_1}{(\rho_1-\rho_2)}\right)=0,
$$
which implies that $c_1=-c_2=c_0(\rho_1-\rho_2)$. Squaring this relation and comparing it to \eqref{squaresAB} concludes the proof. 

\end{proof}

\begin{lem}\label{ABC-sq}
Let $A,B,C,\rho_1,\rho_2,\rho_3\in\overline{\F_q}$ be such that $A$, $B$ and $C$ are non-zero and $\rho_1$, $\rho_2$ and $\rho_3$ are distinct. If the rational fraction
$$
1+\frac{A}{(X-\rho_1)^2}+\frac{B}{(X-\rho_2)^2}+\frac{C}{(X-\rho_3)^2}
$$
is a square, then
$$
\frac{1}{\rho_1-\rho_2}+\frac{1}{\rho_2-\rho_3}+\frac{1}{\rho_3-\rho_1}=0.
$$
\end{lem}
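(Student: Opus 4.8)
The plan is to follow the same strategy that worked in Lemma \ref{AB-sq}: assume the rational function is a square, write down the partial fraction decomposition of its square root, and extract the constraints imposed by matching coefficients. First I would argue, exactly as before, that if
$$
1+\frac{A}{(X-\rho_1)^2}+\frac{B}{(X-\rho_2)^2}+\frac{C}{(X-\rho_3)^2}=g(X)^2
$$
for some $g(X)\in\overline{\F_q}(X)$, then $g$ must have constant polynomial part and only simple poles at $\rho_1,\rho_2,\rho_3$; that is,
$$
g(X)=c_0+\frac{c_1}{X-\rho_1}+\frac{c_2}{X-\rho_2}+\frac{c_3}{X-\rho_3},
$$
with $c_0^2=1$ and $c_1,c_2,c_3\neq 0$ (since $A,B,C$ are nonzero, each of $c_i^2$ equals the corresponding nonzero numerator). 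The point is that the left-hand side has \emph{no} simple-pole terms $1/(X-\rho_i)$, so the residues of $g(X)^2$ at each $\rho_i$ must vanish.

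Next I would compute those residues. Using the identity \eqref{easy-id} to expand the cross terms $2c_ic_j/((X-\rho_i)(X-\rho_j))$, the coefficient of $1/(X-\rho_i)$ in $g(X)^2$ picks up a contribution $2c_0c_i$ from the $c_0\cdot c_i/(X-\rho_i)$ cross term, together with a contribution from each pair $(i,j)$ with $j\neq i$. Concretely, the residue condition at $\rho_1$ reads
$$
c_1\left(c_0+\frac{c_2}{\rho_1-\rho_2}+\frac{c_3}{\rho_1-\rho_3}\right)=0,
$$
and since $c_1\neq 0$ this gives $c_0=-\frac{c_2}{\rho_1-\rho_2}-\frac{c_3}{\rho_1-\rho_3}$, with the two analogous equations obtained by cycling the indices. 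Dividing each equation by the nonzero $c_i$ leaves a linear system in $c_0,c_1,c_2,c_3$.

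The final step is to extract the stated symmetric relation from this system. I would rewrite the three residue equations and sum them (or take a suitable linear combination) so that the terms $c_0$ cancel and the remaining expression factors as a multiple of
$$
\frac{1}{\rho_1-\rho_2}+\frac{1}{\rho_2-\rho_3}+\frac{1}{\rho_3-\rho_1}.
$$
The main obstacle I anticipate is purely bookkeeping: organizing the three residue conditions so that, after using $c_1,c_2,c_3\neq 0$, the $c_0$ and individual $c_i$ dependence drops out and one is left with a relation purely among the $\rho_i$. A clean way to force this is to add the three equations $c_0+\frac{c_j}{\rho_i-\rho_j}+\frac{c_k}{\rho_i-\rho_k}=0$ (for the three cyclic choices of $i$) and use that $\frac{1}{\rho_i-\rho_j}=-\frac{1}{\rho_j-\rho_i}$, so that each off-diagonal coefficient $c_\ell$ appears twice with opposite-sign denominators; collecting the coefficient of each $c_\ell$ then yields the claimed vanishing of the cyclic sum once one checks the system is consistent only in that case. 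Everything here is elementary field arithmetic in $\overline{\F_q}$, so I expect no genuine difficulty beyond keeping the indices straight.
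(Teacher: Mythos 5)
Your setup coincides exactly with the paper's: the partial fraction decomposition $g(X)=c_0+\frac{c_1}{X-\rho_1}+\frac{c_2}{X-\rho_2}+\frac{c_3}{X-\rho_3}$, the identity \eqref{easy-id} to expand the cross terms, the nonvanishing of $c_0,c_1,c_2,c_3$, and the three residue equations
\begin{gather*}
c_0+\frac{c_2}{\rho_1-\rho_2}+\frac{c_3}{\rho_1-\rho_3}=0,\qquad
c_0-\frac{c_1}{\rho_1-\rho_2}+\frac{c_3}{\rho_2-\rho_3}=0,\\
c_0-\frac{c_1}{\rho_1-\rho_3}-\frac{c_2}{\rho_2-\rho_3}=0,
\end{gather*}
are precisely what the paper derives. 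The gap is in your final step, which as described would fail. Adding the three equations produces no cancellation at all: the coefficient $c_1$, for instance, occurs in the second and third equations with denominators $\rho_1-\rho_2$ and $\rho_1-\rho_3$, which are \emph{not} negatives of one another, so the claim that each off-diagonal $c_\ell$ ``appears twice with opposite-sign denominators'' under plain summation is false; the plain sum is merely $3c_0+(\cdots)=0$, one more linear equation with nothing eliminated. Moreover, the mechanism you aim for is backwards: a combination in which the $c_0$ terms cancel leaves a linear relation among the unknown nonzero $c_1,c_2,c_3$, and such a relation cannot by itself yield an identity involving only the $\rho_i$.

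The combination that works --- and it is the one the paper uses --- does the opposite. Multiply the residue equations at $\rho_1,\rho_2,\rho_3$ by $\frac{1}{\rho_2-\rho_3}$, $\frac{1}{\rho_3-\rho_1}$ and $\frac{1}{\rho_1-\rho_2}$ respectively, and add. With these weights it is the $c_1$, $c_2$ and $c_3$ terms that cancel pairwise; for example the two $c_1$ terms are $-\frac{c_1}{(\rho_1-\rho_2)(\rho_3-\rho_1)}$ and $-\frac{c_1}{(\rho_1-\rho_3)(\rho_1-\rho_2)}$, which are negatives of each other. What survives is
$$
c_0\left(\frac{1}{\rho_1-\rho_2}+\frac{1}{\rho_2-\rho_3}+\frac{1}{\rho_3-\rho_1}\right)=0,
$$
and since $c_0^2=1$ forces $c_0\neq 0$, the cyclic sum vanishes. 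Note that the condition $c_0\neq 0$ is indispensable in this last step, while your proposed mechanism never invokes it --- a further sign that a combination designed to eliminate $c_0$ cannot be the right one.
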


\begin{proof}
Suppose there exists $g(X)\in\overline{\F_q}(X)$ such that
\begin{equation}\label{ABC=g2}
1+\frac{A}{(X-\rho_1)^2}+\frac{B}{(X-\rho_2)^2}+\frac{C}{(X-\rho_3)^2}=g(X)^2.
\end{equation}
We consider the partial fraction decomposition of $g(X)$ as before. We find out that
$$
g(X)=c_0 +\frac{c_1}{X-\rho_2}+\frac{c_2}{X-\rho_2}+\frac{c_3}{X-\rho_3},
$$
for some $c_0,\,c_1,\,c_2,\,c_3\in\overline{\F_q}$. Squaring both sides and using the identity \eqref{easy-id}, we obtain
\begin{multline}\label{squaresABC}
g(X)^2=c_0^2+\frac{c_1^2}{(X-\rho_1)^2}+\frac{c_2^2}{(X-\rho_2)^2}+\frac{c_3^2}{(X-\rho_3)^2}\\
+2c_1\left(c_0+\frac{c_2}{\rho_1-\rho_2}+\frac{c_3}{\rho_1-\rho_3}\right)\frac{1}{X-\rho_1}+2c_2\left(c_0-\frac{c_1}{\rho_1-\rho_2}+\frac{c_3}{\rho_2-\rho_3}\right)\frac{1}{X-\rho_2}\\
+2c_3\left(c_0-\frac{c_1}{\rho_1-\rho_3}-\frac{c_2}{\rho_2-\rho_3}\right)\frac{1}{X-\rho_3}.
\end{multline}
As before, we notice that $c_i\neq 0$, $i=0,1,2,3$. This implies that
$$
\begin{cases}
c_0+\frac{c_2}{\rho_1-\rho_2}+\frac{c_3}{\rho_1-\rho_3}=0,\\
c_0-\frac{c_1}{\rho_1-\rho_2}+\frac{c_3}{\rho_2-\rho_3}=0,\\
c_0-\frac{c_1}{\rho_1-\rho_3}-\frac{c_2}{\rho_2-\rho_3}=0.
\end{cases}
\\
$$
Multiplying these equations by $\frac{1}{\rho_2-\rho_3}$, $\frac{1}{\rho_3-\rho_1}$ and $\frac{1}{\rho_1-\rho_2}$ respectively and adding them up gives the result.
\end{proof}

In the proof on the next lemma, $C$ will always denote a non-zero constant that might be different at each appearance. 

\begin{lem}\label{X-sq}
Let $\alpha$ and $\beta$ be elements of $\overline{\F}_q$. Let

$$
f_{\alpha,\beta}(X):=1+\frac{1}{X^2}-\frac{1}{(\alpha X+\beta)^2}-\frac{1}{((1-\alpha)X+(1-\beta))^2}.
$$
Then there exists a set $\mathcal{E}\in {\overline{\F_q}}^2$ with $|\mathcal{E}|\leq 14$ such that for all $(\alpha,\beta)\in {\overline{\F_q}}^2\backslash \mathcal{E}$, the rational fraction $f_{\alpha,\beta}(X)$ is not a square in $\overline{\F_q}(X)$.
\end{lem}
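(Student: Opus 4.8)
The plan is to translate ``$f_{\alpha,\beta}$ is a square'' into the partial-fraction language of Lemmas \ref{AB-sq} and \ref{ABC-sq} and to show that the resulting conditions on $(\alpha,\beta)$ are rigid enough to hold at only finitely many points. Writing $f_{\alpha,\beta}=N(X)/D(X)$ with $D(X)=X^2(\alpha X+\beta)^2((1-\alpha)X+(1-\beta))^2$ a perfect square, being a square is equivalent to $N$ being a square, so the denominator never interferes. I would first dispose of the degenerate configurations in which $f_{\alpha,\beta}$ fails to have three distinct double poles: $\alpha\in\{0,1\}$ (one linear form becomes constant), $\beta\in\{0,1\}$ (a pole merges with $0$), and $\beta=\alpha$ (the two linear forms both become proportional to $X+1$). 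On each of these lines $f_{\alpha,\beta}$ has at most two double poles, so Lemma \ref{AB-sq} applies after pulling out the leading constant, which is automatically a square in $\overline{\F_q}$. The equality $A=B$ forced by Lemma \ref{AB-sq} collides, on the lines $\alpha\in\{0,1\}$ and $\beta\in\{0,1\}$, with the opposite signs of the two surviving terms, so those lines carry no square at all; only on $\beta=\alpha$, where the two subtracted terms merge, does one find finitely many genuine exceptions. These all go into $\mathcal{E}$.

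In the generic case I would put $f_{\alpha,\beta}$ in the exact shape of Lemma \ref{ABC-sq}: the poles are $\rho_1=0$, $\rho_2=-\beta/\alpha$, $\rho_3=-(1-\beta)/(1-\alpha)$, and the coefficients are $A=1$, $B=-1/\alpha^2$, $C=-1/(1-\alpha)^2$. Lemma \ref{ABC-sq} immediately yields the necessary symmetric relation $\tfrac{1}{\rho_1-\rho_2}+\tfrac{1}{\rho_2-\rho_3}+\tfrac{1}{\rho_3-\rho_1}=0$, which after clearing denominators becomes one polynomial equation $(S)$ in $(\alpha,\beta)$. The crucial point, and the reason the lemma cannot be quoted as a black box, is that $(S)$ alone cuts out a \emph{curve}, hence infinitely many points. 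To recover the missing rigidity I would write $g=c_0+\sum_i c_i/(X-\rho_i)$ with $g^2=f_{\alpha,\beta}$, so that $c_0^2=1$, $c_1^2=A$, $c_2^2=B$, $c_3^2=C$, together with the three linear relations making the simple poles of $g^2$ vanish. Solving that linear system (consistent precisely because of $(S)$) for $c_1,c_2$ in terms of $c_0,c_3$ and then imposing $c_1^2=1$ and $c_2^2=-1/\alpha^2$ produces two further polynomial equations tied to the specific values of $A,B,C$; remarkably, the condition coming from $c_2^2=-1/\alpha^2$ involves $\beta$ alone.

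Thus, for each of the finitely many sign choices for the $c_i$, the square locus lies in the common zeros of a polynomial in $\beta$ alone and of the curve $(S)$ (or of $c_1^2=1$). Eliminating $\beta$ makes this intersection finite, and a degree count across the sign patterns, combined with the symmetry $(\alpha,\beta)\mapsto(1-\alpha,1-\beta)$ that preserves $f_{\alpha,\beta}$ and halves the bookkeeping, produces the bound. I expect the main obstacle to be exactly this final accounting: verifying that $(S)$ really is supplemented by the coefficient-specific equations (so that one lands on finitely many points rather than on the whole curve of Lemma \ref{ABC-sq}), and then tallying the solutions over the sign choices and the degenerate lines without double-counting, in order to reach the explicit constant $14$ and not merely finiteness.
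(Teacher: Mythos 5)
Your overall architecture is the same as the paper's: dispose of the degenerate lines $\alpha\in\{0,1\}$, $\beta\in\{0,1\}$, $\alpha=\beta$ via Lemma \ref{AB-sq}, and in the generic case combine Lemma \ref{ABC-sq} with extra conditions extracted from the specific coefficients $A=1$, $B=-1/\alpha^2$, $C=-1/(1-\alpha)^2$. Where you genuinely diverge is the mechanism for those extra conditions. The paper exploits the special identity $L(X)-1=X-\tilde{L}(X)$ (valid since $L+\tilde{L}=X+1$): reducing the squared numerator modulo $L(X)-1$ forces $L(X)-1$ to divide $(a+c)X+(b+d)$, and the two relations so obtained, combined with the dichotomy $c=\pm d$, pin every generic exception to the line $\alpha+\beta=1$; substituting into \eqref{usingABC} then leaves a single quartic, i.e.\ at most $4$ generic exceptions. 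You instead solve the (singular but consistent, thanks to the relation of Lemma \ref{ABC-sq}) linear system for $c_1,c_2$ in terms of $c_0,c_3$ and impose $c_1^2=A$, $c_2^2=B$. This does work, and your ``remarkable'' claim is in fact correct: with $\rho_1=0$, $\rho_2=-\beta/\alpha$, $\rho_3=-(1-\beta)/(1-\alpha)$, $c_0=1$ and $c_3=\pm i/(1-\alpha)$ (where $i^2=-1$), one computes $c_2=-\frac{\beta}{\alpha}\cdot\frac{(1-\beta)\pm i}{1-\beta}$, so that $c_2^2=-1/\alpha^2$ becomes $\beta^2\bigl((1-\beta)\pm i\bigr)^2+(1-\beta)^2=0$, a nontrivial quartic in $\beta$ alone; similarly $c_1^2=1$ becomes $\beta^2(\alpha-\beta\pm i\alpha)^2-\alpha^2(\alpha-\beta)^2=0$, which for each fixed $\beta$ has degree $4$ in $\alpha$ with leading coefficient $-1$, hence is not identically zero. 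So finiteness of the generic exceptional set follows by your route, without the paper's divisibility trick, and your method is more mechanical and would adapt to other pole configurations.

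The genuine shortfall is quantitative, and you flagged it yourself: your generic case yields up to $2\times 4\times 4=32$ points (two signs for $c_3$, four roots in $\beta$, four in $\alpha$), and even intersecting further with the curve of Lemma \ref{ABC-sq} does not obviously bring the total below the stated bound $|\mathcal{E}|\leq 14$. The paper reaches $14$ precisely because its trick collapses the generic case to one quartic on the line $\alpha+\beta=1$ ($4$ points), leaving $4+4+6$ in all. For the application (Lemma \ref{UV-sq}, Proposition \ref{completesum}) any absolute constant suffices, but as a proof of the lemma as literally stated your argument is incomplete without that accounting. A second, smaller inaccuracy: the claim that the lines $\alpha\in\{0,1\}$ and $\beta\in\{0,1\}$ ``carry no square at all'' fails at a few special points, e.g.\ $(\alpha,\beta)=(0,1)$ and $(1,0)$ give $f_{\alpha,\beta}\equiv 0$, and at $(0,-1)$ the constant you want to pull out vanishes, so Lemma \ref{AB-sq} does not apply there; these finitely many points must simply be placed into $\mathcal{E}$, as the paper does with its sets $\mathcal{E}_2,\ldots,\mathcal{E}_5$, and they too must be counted toward the final constant.
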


\begin{proof}
Suppose there exists $g(X)\in\overline{\F_q}(X)$ such that
\begin{equation}\label{f=g2}
f_{\alpha,\beta}(X)=g(X)^2.
\end{equation}
\textbf{First case.} Suppose the polynomials $X$, $L(X)=\alpha X+\beta$ and $\tilde{L}(X)=(1-\alpha)X+(1-\beta)$ are non-constant and pairwise coprime.

In this case, Lemma \ref{ABC-sq} gives
\begin{equation}\label{usingABC}
\frac{\alpha}{\beta}+\frac{\alpha(1-\alpha)}{\alpha-\beta}-\frac{1-\alpha}{1-\beta}=0.
\end{equation}
We consider the partial fraction decomposition of $g(X)$. It is not difficult to see that the polynomial part of $g(X)$ must be constant and that the roots of $X$, $L(X)$ and $\tilde{L}(X)$ are the only poles of $g(X)$ and this poles are simple. In other words, we must have that
$$
g(X)=a +\frac{b}{X}+\frac{c}{L(X)}+\frac{d}{\tilde{L}(X)},
$$
for some $a,b,c,d \in \overline{\F_q}$. This and \eqref{f=g2} give
\begin{multline}\label{numerators}
X^2L(X)^2\tilde{L}(X)^2 +L(X)^2\tilde{L}(X)^2-X^2\tilde{L}(X)^2-X^2L(X)^2=\\
\big(aXL(X)\tilde{L}(X) +bL(X)\tilde{L}(X)+cX\tilde{L}(X)+dXL(X)\big)^2.
\end{multline}
In particular, $a^2=b^2=1$ and $c^2=d^2=-1$.

We remark that
\begin{equation}\label{L1XL}
L(X)-1=X-\tilde{L}(X),
\end{equation}
and since the left-hand side of \eqref{numerators} can be written as
$$
X^2\tilde{L}(X)^2(L(X)^2-1)-L(X)^2(X^2-\tilde{L}(X)^2),
$$
we see that it is divisible by $L(X)-1$. Hence the same holds for the right-hand side.

We notice that (recall \eqref{L1XL})
\begin{multline}
aXL(X)\tilde{L}(X) +bL(X)\tilde{L}(X)+cX\tilde{L}(X)+dXL(X)\equiv\\
(a+c)X^2 +(b+d)X\pmod {L(X)-1}.
\end{multline}
Therefore $L(X)-1$ divides $(a+c)X^2 +(b+d)X$. Since $L(X)-1=X-\tilde{L}(X)$, it follows that $L(X)-1$ is coprime with $X$, and thus $L(X)-1$ divides $(a+c)X +(b+d)$. Finally, since $a^2=1$ and $c^2=-1$, and $q$ is odd, we see that $a+c$ is non-zero. It follows from the above discussion that
$$
\frac{\beta-1}{\alpha}=\frac{b+d}{a+c}.
$$
By interchanging the roles of $L(X)$ and $\tilde{L}(X)$ in the above argument, leads to
$$
\frac{-\beta}{1-\alpha}=\frac{b+c}{a+d}.
$$
Since $c^2=d^2=-1$, then either $c=d$, in which case
$$
\frac{\beta-1}{\alpha}=\frac{-\beta}{1-\alpha},
$$
and hence, $\alpha+\beta=1$.

On the other hand, if $c=-d$, then
$$
\frac{\beta-1}{\alpha}\frac{-\beta}{1-\alpha}=\frac{b-c}{a+c}\frac{b+c}{a-c}=1,
$$
in which case $\alpha(1-\alpha)=\beta(1-\beta)$. That is $\alpha=\beta$ or $\alpha+\beta=1$. Notice that $\alpha=\beta$ contradicts the hypothesis that $L(X)$ and $\tilde{L}(X)$ are coprime.

Suppose we have $\alpha+\beta=1$. Then, by \eqref{usingABC}, we see that
$$
\frac{\alpha}{1-\alpha}+\frac{\alpha(1-\alpha)}{2\alpha-1}-\frac{1-\alpha}{\alpha}=0,
$$
which implies $\alpha^4-2\alpha^3+5\alpha^2-4\alpha+1=0$. We put 
$$
\mathcal{E}_1=\{(\alpha,1-\alpha)\in \overline{\F_q}^2;\; \alpha^4-2\alpha^3+5\alpha^2-4\alpha+1=0\},
$$
so that if we assume $\mathcal{E}_1\subset \mathcal{E}$, then we are done in this case.

\textbf{Second case} Suppose now that $X$, $L(X)$, $\tilde{L}(X)$ are not pairwise coprime or one of them is constant.

There are a few cases to consider, namely $\alpha\in\{0,1\}$, $\beta\in\{0,1\}$ and $\alpha=\beta$.

\begin{itemize}
\item If $\alpha=0$.
\end{itemize}
Suppose further that $\beta\neq 0,1,-1$. In this case we have
$$
f(X)=\left(1-\frac{1}{\beta^2}\right)\left(1+\frac{\beta^2}{\beta^2-1}\cdot\frac{1}{X^2}-\frac{\beta^2}{\beta^2-1}\cdot\frac{1}{(X+(1-\beta))^2}\right).
$$
But since $q$ is odd, Lemma \ref{AB-sq} implies that  $f_{\alpha,\beta}(X)$ is not a square unless $(0,\beta)\in \mathcal{E}_2$, where
$$
\mathcal{E}_2=\{(0,-1),(0,0),(0,1)\}.
$$

\begin{itemize}
\item If $\alpha=1$.
\end{itemize}
Since $f_{\alpha,\beta}=f_{1-\alpha,1-\beta}$, it follows from the previous case that $f_{\alpha,\beta}(X)$ is not a square unless $(1,\beta)\in \mathcal{E}_3$, where
$$
\mathcal{E}_3=\{(1,0),(1,1),(1,2)\}.
$$

\begin{itemize}
\item If $\beta=0$.
\end{itemize}
Suppose further that $\alpha\neq 0,1$. In this case we have

$$
f_{\alpha,\beta}(X)=1+\left(1-\frac{1}{\alpha^2}\right)\frac{1}{X^2}-\frac{1}{(1-\alpha)^2}\cdot\frac{1}{(X-\frac{1}{\alpha-1})^2}.
$$
Lemma \ref{AB-sq} now says that if $f_{\alpha,\beta}$ is a square, then

$$
-\frac{1}{\alpha^2}=-\frac{1}{(1-\alpha)^2}=\frac{1}{(1-\alpha)^2}.
$$
And since $q$ is odd, this is impossible. So that $f_{\alpha,\beta}(X)$ is not a square unless $(\alpha,0)
\in \mathcal{E}_4$, where
$$
\mathcal{E}_4=\{(0,0),(0,1)\}.
$$

\begin{itemize}
\item If $\beta=1$.
\end{itemize}
Again, by using the identity $f_{\alpha,\beta}=f_{1-\alpha,1-\beta}$, it follows from the previous case that $f_{\alpha,\beta}(X)$ is not a square unless $(\alpha,1)\in \mathcal{E}_4$, where
$$
\mathcal{E}_5=\{(1,0),(1,1)\}.
$$
\begin{itemize}
\item If $\alpha=\beta$
\end{itemize}
Suppose further that $\alpha\neq 0,1$. In this case we have
$$
f_{\alpha,\beta}(X)=1+\frac{1}{X^2}-\left(\frac{1}{\alpha^2}+\frac{1}{(1-\alpha)^2}\right)\frac{1}{(X+1)^2}.
$$
Once again by Lemma \ref{AB-sq}, we have that $f(X)$ is not a square unless
$$
1=-\left(\frac{1}{\alpha^2}+\frac{1}{(1-\alpha)^2}\right),
$$
which implies $\alpha^4-2\alpha^3+3\alpha^2-2\alpha+1=0.$ Thus it follows that $f_{\alpha,\beta}(X)$ is not a square unless $(\alpha,\alpha)\in \mathcal{E}_6$, where
$$
\mathcal{E}_6= \left\{(0,0),(1,1)\right\}\cup\{(\alpha,\alpha)\in \overline{\F_q}^2;\; \alpha^4-2\alpha^3+3\alpha^2-2\alpha+1=0\}.
$$
Then assuming $\mathcal{E}_6 \subset \mathcal{E}$ concludes this case.

We summarize by saying that putting $\mathcal{E}=\mathcal{E}_1\cup\ldots\cup\mathcal{E}_6$, so that $|\mathcal{E}|\leq 14$, we finish the proof of the lemma.
\end{proof}

We close this section with the following lemma, whose proof is to a large extent an adaptation of the argument in \citep[pages 27-29]{blomer2014moments}.

\begin{lem}\label{UV-sq}
Let $q$ be an odd prime. $\alpha$, $\beta$, $h$ be elements of $\overline{\F}_q$. Let  $F=F_{\alpha,\beta,h}$ be the rational function given by

$$
F(U,V):=\frac{1}{U^2}+\frac{1}{V^2}-\frac{1}{(\alpha U+\beta V +h)^2}-\frac{1}{((1-\alpha)U+(1-\beta)V - h)^2}.
$$
Then there exists a set $\mathcal{E}\in {\overline{\F_q}}^2$ with $|\mathcal{E}|\leq 14$ such that for all $(\alpha,\beta)\in {\overline{\F_q}}^2\backslash \mathcal{E}$ and every $h \in \overline{\F_q}$, the rational function $F(U,V)$ is \textit{well-defined} and is not \textit{composed}. That is, we cannot write
$$
F=Q\circ P,
$$
where $P(U,V)\in \overline{\mathbb{F}}_q(U,V)$ and $Q(T)\in\overline{\mathbb{F}}_q(T)$ is not a fractional linear transformation.
\end{lem}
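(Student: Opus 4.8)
The plan is to reduce the two-variable compositeness question for $F$ to the one-variable square question already settled in Lemma \ref{X-sq}, by slicing $F$ along horizontal lines. The starting observation is the linear identity $U+V=L_1+L_2$, where $L_1=\alpha U+\beta V+h$ and $L_2=(1-\alpha)U+(1-\beta)V-h$, which is the two-variable counterpart of the identity $L(X)-1=X-\tilde L(X)$ used in Lemma \ref{X-sq}. For a nonzero scalar $v_0$, the substitution $V=v_0,\ U=v_0X$ gives, after multiplying through by $v_0^2$,
\[
g_{v_0}(X):=v_0^2\,F(v_0X,v_0)=f_{\alpha,\,\beta+h/v_0}(X),
\]
so every horizontal slice of $F$ equals the one-variable function of Lemma \ref{X-sq} with second parameter shifted to $\beta+h/v_0$. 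For well-definedness, I would note that the only denominators that can vanish identically for some $h$ are $L_1$ (only if $(\alpha,\beta)=(0,0)$) and $L_2$ (only if $(\alpha,\beta)=(1,1)$); since both $(0,0)$ and $(1,1)$ already belong to the exceptional set $\mathcal{E}$ of Lemma \ref{X-sq}, for $(\alpha,\beta)\notin\mathcal{E}$ and every $h$ the function $F$ is a genuine rational function, which settles the well-definedness claim.

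Now suppose, for a contradiction, that $F=Q\circ P$ with $Q$ not a fractional linear transformation, i.e.\ $\deg Q\geq 2$. For all but finitely many $v_0$ the slice $P(v_0X,v_0)$ is non-constant in $X$ — otherwise $\partial P/\partial U$ would vanish on infinitely many horizontal lines, hence identically, so $P$ and $F$ would be independent of $U$, which is impossible because $F$ has a pole along $U=0$ — so $g_{v_0}=Q_1\circ P(v_0\,\cdot\,,v_0)$ is a composite one-variable function whose outer map $Q_1=v_0^2Q$ still has degree $\geq 2$. The key point, and the step I expect to be the main obstacle, is to upgrade this compositeness to the statement that $g_{v_0}$ is a perfect square in $\overline{\F_q}(X)$. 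Every pole of $g_{v_0}=f_{\alpha,\beta+h/v_0}$ has order exactly $2$ (each summand $1/(\text{linear})^2$ contributes only order-$2$ poles, and coincidences cannot raise the order). Writing $g_{v_0}^{-1}(\infty)=P(v_0\,\cdot\,,v_0)^{-1}\bigl(Q_1^{-1}(\infty)\bigr)$ and using that all polar orders equal the prime $2$, the ramification of $Q_1$ over its polar locus rules out $\deg Q_1\in\{3,6\}$ and forces $\deg Q_1=2$ with $Q_1$ totally ramified there; hence, after post-composing $Q_1$ with a fractional linear transformation, $g_{v_0}-c'=c\,\psi^2$ is a constant multiple of a square for some constants $c\neq 0$ and $c'$.

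It remains to remove the additive constant $c'$ and conclude that $g_{v_0}$ itself is a square. This is where the identity $U+V=L_1+L_2$ re-enters: it makes the numerator of $F$ factor as $N=(L_1L_2-UV)\bigl[(U+V)^2(L_1L_2+UV)-2UVL_1L_2\bigr]$, and a residue computation at the poles — carried out as in the proof of Lemma \ref{X-sq}, of which this is the promised adaptation of \citep[pp.~27--29]{blomer2014moments} — pins down $c'$ and shows that in fact $g_{v_0}=f_{\alpha,\beta+h/v_0}$ is a genuine square. Granting this, Lemma \ref{X-sq} forces $(\alpha,\beta+h/v_0)\in\mathcal{E}$ for all but finitely many $v_0$. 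Since $\alpha$ is fixed and $\mathcal{E}$ is finite, the second coordinate is confined to finitely many values, whereas $\beta+h/v_0$ assumes infinitely many distinct values as $v_0$ varies unless $h=0$; thus $h=0$ and then $(\alpha,\beta)\in\mathcal{E}$, contradicting $(\alpha,\beta)\notin\mathcal{E}$. Taking $\mathcal{E}$ to be the exceptional set of Lemma \ref{X-sq}, so that $|\mathcal{E}|\leq 14$, we conclude that for every $(\alpha,\beta)\notin\mathcal{E}$ and every $h$ the function $F$ is well-defined and not composed. The genuinely two-variable input is confined to the slicing and the numerator factorization above; the rest is bookkeeping around Lemma \ref{X-sq}.
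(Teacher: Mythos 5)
Your slicing identity $v_0^2F(v_0X,v_0)=f_{\alpha,\beta+h/v_0}(X)$ is correct, and so is your endgame: \emph{if} every generic slice were a genuine square, Lemma \ref{X-sq} would force $(\alpha,\beta+h/v_0)\in\mathcal{E}$ for infinitely many $v_0$, hence $h=0$ and $(\alpha,\beta)\in\mathcal{E}$. The genuine gap is the bridge you yourself flagged: passing from ``$F=Q\circ P$ with $\deg Q\geq 2$'' to ``the slice $g_{v_0}$ is a square''. Writing $\deg Q\cdot\deg p_{v_0}=\deg g_{v_0}=6$ with $p_{v_0}=P(v_0\,\cdot\,,v_0)$, there are three cases, and your pole-order counting closes none of them. (i) If $\deg Q=6$, then $p_{v_0}$ is a fractional linear map, and the relation $g_{v_0}=Q_1\circ p_{v_0}$ carries no information at all: \emph{every} rational function is such a composition. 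No contradiction can be extracted from any single slice in this case; it requires genuinely two-variable input. (ii) If $\deg Q=3$, the counting does not rule it out: take $Q_1$ with pole orders $(1,2)$ and $p_{v_0}$ of degree $2$ ramified over the simple pole; then all poles of the composite have order exactly $2$. Concretely, $Q_1(T)=1/\bigl(T^2(T-1)\bigr)$ and $p(X)=X^2+1$ give $Q_1(p(X))=1/\bigl(X^2(X^2+1)^2\bigr)$, with three double poles. So ``all polar orders equal the prime $2$'' does not force $\deg Q_1=2$. (iii) Even in the case $\deg Q=2$, where the counting does force a single double pole of $Q_1$, post-composing with a M\"obius map only yields $g_{v_0}=c\,\psi^2+c'$ with a possibly nonzero constant $c'$. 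Lemma \ref{X-sq} treats genuine squares only; it says nothing about $f_{\alpha,\beta'}-c'$ being a constant multiple of a square when $c'\neq 0$. Your promised ``residue computation pinning down $c'$'' is exactly the missing argument, and nothing in the one-variable data visibly supplies it. (A smaller flaw: in characteristic $q$, $\partial P/\partial U\equiv 0$ does not imply $P$ is independent of $U$ --- consider $P=U^q$; your non-constancy-of-slices claim is true but should be proved by proportionality of coefficients, not by derivatives.)

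These are precisely the points where the paper keeps the two-variable structure that slicing destroys. After the substitution $X=U/V$, $Y=V$, the paper compares the factorization of $F(XY,Y)$ induced by $Q\circ P$ with the explicit numerator $\NUM$ and denominator $\DEN$. For $h\neq 0$, the specialization of the numerator at $Y=0$ is $C(X^2+1)$, which has only simple roots; this forces $q_1=1$ in one stroke, eliminating all of $\deg Q\in\{2,3,6\}$ (in particular the cases (i) and (ii) that your argument cannot see), and shows $F$ is never composed when $h\neq 0$, with no exceptional $(\alpha,\beta)$ whatsoever. For $h=0$, the $Y$-independence of $\NUM(X,Y)$ forces $Q_1$ to have a unique root, whence $F(XY,Y)=(P_1/P_2)^{q_1}$ exactly as in \eqref{F=power} --- a pure power, with no additive constant --- and only then does the square question of Lemma \ref{X-sq} enter; this is how the paper avoids your difficulty (iii). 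Your factorization of the numerator as $(L_1L_2-UV)\bigl[(U+V)^2(L_1L_2+UV)-2UVL_1L_2\bigr]$ is correct and a nice observation, but as written it is not used to close any of the three gaps above, so the proposal does not constitute a proof.
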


\begin{proof}
We start by making the birational change of variables
$$
X=U/V,\;Y=V.
$$
Thus we have
$$
F(XY,Y)=\frac{1}{X^2Y^2}+\frac{1}{Y^2}-\frac{1}{(YL(X)+h)^2}-\frac{1}{(Y\tilde{L}(X)-h)^2},
$$
where we put
$$
L(X)=\alpha X+\beta;\; \tilde{L}(X)=(1-\alpha)X+(1-\beta).
$$
We need to prove that if $(\alpha,\beta)\not\in \mathcal{E}$, then $F(XY,Y)$ cannot be expressed in the form
$$
\frac{Q_1\left(P_1(X,Y)/P_2(X,Y)\right)}{Q_2\left(P_1(X,Y)/P_2(X,Y)\right)},
$$
where $P_1(X,Y),P_2(X,Y)\in\overline{\F}_q[X,Y]$ are coprime polynomials and
$$
Q_1(T)=C\prod_{\lambda}(T-\lambda)^{m(\lambda)},\,Q_2(T)=\prod_{\mu}(T-\mu)^{m(\mu)}
$$
are also coprime. Here the products are taken over the roots of $Q_1$ and $Q_2$ respectively. Moreover $m(\lambda)$ and $m(\mu)$ denote the multiplicities of these roots. Let $q_1=\deg Q_1 = \sum_{\lambda}m(\lambda)$ and $q_2=\deg Q_2 = \sum_{\mu}m(\mu)$. We remark that we can always suppose that $q_1>q_2$. If this is not the case, we simply make the change of variables
$$
T\mapsto \mu_0 + \frac{1}{T'},
$$
where $\mu_0$ is any root of $Q_2(X,Y)$.

We want to prove that $q_1=1$. We have
$$
F(XY,Y)=\frac{C\prod_{\lambda}(P_1(X,Y)-\lambda P_2(X,Y))^{m(\lambda)}}{P_2(X,Y)^{q_1-q_2}\prod_{\mu}(P_1(X,Y)-\mu P_2(X,Y))^{m(\mu)}}=:\frac{\NUM(X,Y)}{\DEN(X,Y)},
$$
with $\NUM(X,Y)$ and $\DEN(X,Y)$ coprime. In the other hand
\begin{equation}
F(XY,Y)=\frac{\NUM'(X,Y)}{\DEN'(X,Y)},
\end{equation}
where 
\begin{multline*}
\NUM'(X,Y)=
(YL(X)+h)^2(Y\tilde{L}(X)-h)^2(X^2+1)\\
-X^2Y^2((YL(X)+h)^2+(Y\tilde{L}(X)-h)^2)
\end{multline*}
and 
\begin{equation*}
\DEN'(X,Y)=X^2Y^2(YL(X)+h)^2(Y\tilde{L}(X)-h)^2	.	
\end{equation*}
In what follows we distinguish two cases.

\textbf{Case I: }$h\neq 0$.\\
Since $X$, $Y$, $YL(X)+h$ and $Y\tilde{L}(X)-h$ are relatively coprime, then $\NUM'(X,Y)$ and $\DEN'(X,Y)$ are coprime and hence equal $C\cdot\NUM(X,Y)$ and $C\cdot \DEN(X,Y)$ respectively. Comparing the expressions for $\DEN(X,Y)$ and $\DEN'(X,Y)$ we see that either $Y$ divides $P_2$, or it divides $P_1-\mu P_2$ for some $\mu$, a root of $Q_2$. In the second case, up to making a linear change of variables $T\mapsto T+\mu$, we can suppose $\mu=0$, $Y$ divides $P_1$ and $\lambda \neq 0$ for every $\lambda$ which is a root of $Q_1$. In any case we have that
$$
\NUM(X,0)=C\cdot P_1(X,0)^{q_1}\text{ or }C\cdot P_2(X,0)^{q_1}.
$$
But $\NUM(X,0)=C(X^2+1)$, thus $\NUM(X,0)$ only has simple roots. Therefore $q_1=1$. And since $q_2< q_1$, then $q_2=0$.

We proved that when $h\neq 0$, $F(U,V)$ is not composed for any $(\alpha,\beta)\in \overline{\F_q}^2$.

\textbf{Case II:} $h=0$.\\
In this case we have
\begin{equation}\label{h=0}
F(XY,Y)=\frac{L(X)^2\tilde{L}(X)^2(X^2+1)-X^2(L(X)^2+\tilde{L}(X)^2)}{X^2Y^2L(X)^2\tilde{L}(X)^2}.
\end{equation}
Suppose that $F(XY,Y)\neq 0$. Then we see that $\NUM(X,Y)$ must divide the numerator of the right-hand side of \eqref{h=0}. Hence it is independent of $Y$.

\begin{itemize}
\item Suppose $q_2>0$.
\end{itemize}
We notice that we must have that $Y$ divides $DEN(X,Y)$ and that $DEN(X,Y)$ divides $X^2Y^2L(X)^2\tilde{L}(X)^2$. Since all the factors in
$$
P_2(X,Y)^{q_1-q_2}\prod_{\mu}\left(P_1(X,Y)-\mu P_2(X,Y)\right)
$$
are coprime, we see that one of them must be divisible by $Y$ and all the others must be independent of $Y$. Now by the same argument as above, we can suppose that $\lambda=0$ is not a root of $Q_1$ and that $P_1$ and $P_2$ are two non-zero polynomials such that one of which is divisible by $Y$ and the other is independent of $Y$. But this is not possible since
\begin{equation}\label{recallNUM}
\NUM(X,Y)=C\prod_{\lambda}(P_1(X,Y)-\lambda P_2(X,Y))^{m(\lambda)},
\end{equation}
and the left-hand side is independent of $Y$ and the right-hand side cannot be.
\begin{itemize}
\item Suppose now that $q_2=0$.
\end{itemize}
This case is more delicate. We have that $Y$ divides $P_2$. The fact that $\NUM(X,Y)$ is independent of $Y$ implies that the same holds for $(P_1(X,Y)-\lambda P_2(X,Y))$ for every $\lambda$ which is a root of $Q_1$. But this implies that $Q_1$ has a unique root. Indeed, if $\lambda\neq \lambda'$, then
$$
(P_1(X,Y)-\lambda P_2(X,Y)) - (P_1(X,Y)-\lambda' P_2(X,Y))=(\lambda-\lambda')P_2(X,Y)
$$
is non-zero and divisible by $Y$ so that it is not possible for both to be independent of $Y$. Therefore, up to making the linear change of variables $T\mapsto T+\lambda$, we have that
\begin{equation}\label{F=power}
F(XY,Y)=\frac{P_1(X,Y)^{q_1}}{P_2(X,Y)^{q_1}}.
\end{equation}
We notice that since $Y\mid P_2$ and $P_2^{q_1}\mid X^2Y^2L(X)^2\tilde{L}(X)^2$, we must have $q_1=1$ or $2$. We only have to rule out the case where $q_1=2$. That is, we need to ensure that
$$
F(XY,Y)= \frac{1}{Y^2}\left(1+\frac{1}{X^2}-\frac{1}{L(X)^2}-\frac{1}{\tilde{L}(X)^2}\right)
$$
is not a square in $\overline{\F_q}(X)$. But Lemma \ref{X-sq} precisely gives a set $\mathcal{E}$ whose cardinality is bounded by of $14$ and such that if $(\alpha,\beta)\not\in \mathcal{E}$, then 
$$
\left(1+\frac{1}{X^2}-\frac{1}{L(X)^2}-\frac{1}{\tilde{L}(X)^2}\right)
$$
is not square. Thus the same holds for $F(XY,Y)$, which concludes this case.

Finally, we consider the case where $F(XY,Y)=0$. That is
$$
L(X)^2\tilde{L}(X)^2(X^2+1)-X^2(L(X)^2+\tilde{L}(X)^2)=0.
$$
By simply comparing the coefficients of degree 6 and 0, we see that this is only possible if $(\alpha,\beta)=(0,1)$ or $(1,0)$, both of which belong to the set $\mathcal{E}$ from Lemma \ref{X-sq}. This concludes the proof of Lemma \ref{UV-sq}.
\end{proof}
\section{Bounds for exponential sums}\label{exponentialsums}
In this section we prove the bounds for exponential sums on Theorems \ref{expsum1} and \ref{expsum2}. Let $q$ be an odd prime number. Let $j\geq 1$ be an integer, and let $M$ and $N$ be real numbers such that
$$
1\leq M\leq N^2,\,\,N<q,\,\,MN^j< q^{\frac{j+2}{2}}.
$$
Throughout this section we use the notation $x\sim X$ meaning the inequalities
$$
X/2<x\leq X.
$$
Let $\bm{\alpha}=(\alpha_m)$ a sequence of complex numbers supported on $m\sim M$. Let $\mathcal{N}$ be an interval of length $N$. Let further $K:\Z\rightarrow \C$ be a bounded periodic function of period $q$.

Finally, we let $S_{K,j}=\mathcal{S}_{K,j}(\bm{\alpha},M,\mathcal{N})$ be given by

$$
\mathcal{S}_{K,j}=\mathcal{S}_{K,j}(\bm{\alpha},M,\mathcal{N}):=\sum_{m\leq M}\sum_{n\in \mathcal{N}}\alpha_m K(mn^j).
$$
A simple application Cauchy's Inequality gives
\begin{equation}\label{onlyWeil}
\mathcal{S}_{K,j}\ll (\|\bm{\alpha}\|_1\|\bm{\alpha}\|_2)^{1/2}M^{1/4}N\|K\|_{\infty},
\end{equation}
where $\|K\|$ denotes the maximum of $K$ (recall that $K$ is periodic). In what follows we show how to improve upon this estimate for some specific choices of $K$ and $j$. To do so, we use Vinogradov's "shift by $ab$" technique in the following manner. Let $A,B\geq 1$ be such that
\begin{equation}\label{necessary}
AB\leq N,\,\,A^jM< q.
\end{equation}
We have
\begin{align*}
\mathcal{S}_{K,j}&=\frac{1}{AB}\sum_{a\sim A}\sum_{b\sim B}\sum_{m\leq M}\sum_{n+ab\in \mathcal{N}}\alpha_m K(m(n+ab)^j)\\
&=\frac{1}{AB}\sum_{a\sim A}\sum_{b\sim B}\sum_{m\leq M}\sum_{n+ab\in \mathcal{N}}\alpha_m K(a^jm({\bar a}n+b)^j).
\end{align*}
Suppose $I=[u,u']$ and let $g$ be an infinitely differentiable function supported on $[u-1,u'+1]$ such that $g(x)\geq 1$ for $x\in I$ and
$$
g^{j}(x)\ll x^{-j},\,\, j=0,1,2.
$$
We deduce
\begin{equation}\label{estimatesfourier}
\widehat{g}(y)\ll \min(N, |y|^{-1},|y|^{-2}).
\end{equation}
Following the lines of \citep[p.116]{fouvry1998certaines}, we see that by Fourier inversion, we have that
\begin{align*}
|\mathcal{S}_{K,j}|&\leq \frac{1}{AB}\sum_{a\sim A}\sum_{m\leq M}\sum_{n\in \mathcal{N}'}\left|\alpha_m \sum_{b\sim B}K(a^jm({\bar a}n+b)^j)g(n+ab)\right|\\
&\leq \frac{1}{AB}\sum_{a\sim A}\sum_{m\leq M}\sum_{n\in \mathcal{N}'}\frac{\left|\alpha_m \right|}{a}\int_{\R}\left|\widehat{g}\left( t/a\right)\right|\left|\sum_{b\sim B}K(a^jm({\bar a}n+b)^j)e(-bt)\right|dt.
\end{align*}
Now by \eqref{estimatesfourier} and the upper bound
$$
\int_{\R}\min(N, |y|^{-1},|y|^{-2})dy \ll \log N \leq \log q,
$$
we see  that, there exists $t\in \R$ such that
$$
\mathcal{S}_{K,j} \ll \frac{\log q}{AB}\sum_{a\sim A}\sum_{m\leq M}\sum_{n\in \mathcal{N}'}\left|\alpha_m \right| \left|\sum_{b\sim B}K(a^jm({\bar a}n+b)^j)e(-bt)\right|.
$$
We make the change of variables $r=a^jm$ and $s=\overline{a}n$. We obtain (for $\eta_b=e(-bt)$)
$$
\mathcal{S}_{K,j}\ll \frac{\log q}{AB}\sum_{r\rmod q}\sum_{s\leq A^jM}\nu(r,s)\left|\sum_{b\sim B}\eta_b K(s(r+b)^j)\right|,
$$
for
$$
\nu(r,s)=\underset{am\equiv s,\,{\bar a}n\equiv r \rmod q}{\sum_{a\sim A}\sum_{m\leq M}\sum_{n\in \mathcal{N}'}}|\alpha_m|,
$$
where $\mathcal{N}'$ is an interval containing $\mathcal{N}$ of length $2N$ and $|\eta_b|\leq 1$. Now we see that exactly as in \citep[p.116]{fouvry1998certaines} or \citep[p.26]{blomer2014moments}, we have the inequalities
$$
\sum_{r,s}\nu(r,s)\ll AN\|\bm{\alpha}\|_1\text{ and }\sum_{r,s}\nu(r,s)^2\ll q^{\epsilon}AN\|\bm{\alpha}\|_2^2.
$$
These bounds combined with another application of H\"older's inequality give
\begin{equation}\label{Holder}
AB\times \mathcal{S}_{K,j}\ll q^{\epsilon}(AN)^{3/4}(\|\bm{\alpha}\|_1\|\bm{\alpha}\|_2)^{1/2}\left(\sum_{r\rmod q}\sum_{s\leq A^jM}\left|\sum_{b\sim B}{\eta}_b K(s(r+b)^j)\right|^4\right)^{1/4}.
\end{equation}
Expanding the fourth power, we see that the double sum over $r$ and $s$ can be written as
$$
\sum_{\bm{b}\in\mathcal{B}}\eta(\bm{b})\Sigma_j(K,\bm{b}),
$$
where $\mathcal{B}$ denotes the set of quadruples $\bm{b}=(b_1,b_2,b_3,b_4)$ such that $b_i\sim B$ for $1\leq i\leq 4$,
$$
\Sigma_j(K;\bm{b}):= \sum_{r\rmod q}\sum_{s\leq A^jM}K(s(r+b_1)^j)K(s(r+b_2)^j)\overline{K(s(r+b_3)^j)K(s(r+b_4)^j)}.
$$
and the coefficients $\eta(\bm{b})$ satisfy $|\eta(\bm{b})|\leq 1$ for every $\bm{b}\in \mathcal{B}$.

We now proceed to estimate $\Sigma_j(K,\bm{b})$. In most cases we expect a lot of cancellation when we sum over $r$ and $s$ but for certain (diagonal) cases, we cannot expect this to happen (for example when $\{b_1,b_2\}=\{b_3,b_4\}$).

Let $\mathcal{B}^{\Delta}$ be a subset of $\mathcal{B}$ to be specified later and such that $\mathcal{B}^{\Delta}$ contains $\{\bm{b}\in \mathcal{B};\,(b_1,b_3)=(b_2,b_4)\}$. For those $\bm{b}\in \mathcal{B}^{\Delta}$, we do not seek for cancellation when we sum over $r$ and $s$. We simply bound everything trivially:

\begin{equation}\label{diagonal}
\sum_{\bm{b}\in\mathcal{B}^{\Delta}}\eta(\bm{b})\Sigma_j(K,\bm{b}) \leq |\mathcal{B}^{\Delta}|\times A^jMq \times\|K\|_{\infty}^4.
\end{equation}

In the non-diagonal case, i.e. $\bm{b}\in \mathcal{B}\backslash \mathcal{B}^{\Delta}$, we complete the sum over $s$ using additive characters. We thus obtain

\begin{equation}\label{completion}
\Sigma_j(K,\bm{b})\ll (\log q)\max_{0\leq h< q}\Sigma_j(K,\bm{b},h),
\end{equation}
where
\begin{equation}\label{Kbh}
\Sigma_j(K;\bm{b},h):= \underset{r,s\rmod q}{\sum\sum}\prod_{i=1}^2K(s(r+b_i)^j)\overline{K(s(r+b_{i+2})^j)}e_q(hs).
\end{equation}
In the following we will prove square-root cancellation for most of the $\bm{b}\in\mathcal{B}$.
\begin{prop}\label{completesum}
Let $q$ be an odd prime number. Let $a$ and $b$ be coprime with $q$. Let $K_1$  and $K_2$ be given by \eqref{KK}. With notation as above, there exists a choice for $\mathcal{B}^{\Delta}$ satisfying $|\mathcal{B}^{\Delta}|\ll B^2$ and for every $\bm{b}\in \mathcal{B}\backslash\mathcal{B}^{\Delta}$, and every $h\in \mathbb{F}_q$, we have the inequalities
\begin{equation}
\Sigma_1(K_1,\bm{b},h)\ll q\text{ and }\Sigma_2(K_2,\bm{b},h)\ll q,
\end{equation}
where the implied constants are absolute.
\end{prop}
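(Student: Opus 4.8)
The plan is to open each normalized Kloosterman-type factor via its definition \eqref{Smnq}, to evaluate the two ``free'' variables $s$ and $r$ by orthogonality of additive characters, and thereby to reduce each $\Sigma_j$ to an exponential sum over a two-dimensional variety governed by the rational function $F_{\alpha,\beta,h}$ of Lemma \ref{UV-sq}, at which point Deligne's bound applies. Take $\Sigma_1(K_1;\bm b,h)$ first. Writing each factor as $K_1(s(r+b_i))=q^{-1/2}\sideset{}{^*}\sum_{u_i}e_q(a\bar u_i^2+bs(r+b_i)u_i)$ and expanding the product over $i=1,\dots,4$ (the last two entering through complex conjugation), the variable $s$ occurs only linearly; summing it against $e_q(hs)$ over $\Z/q\Z$ therefore forces the constraint $b\big(r(u_1+u_2-u_3-u_4)+(b_1u_1+b_2u_2-b_3u_3-b_4u_4)\big)+h\equiv 0$, leaving the $r$- and $s$-free phase $a(\bar u_1^2+\bar u_2^2-\bar u_3^2-\bar u_4^2)$. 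I would then sum over $r$. When $u_1+u_2-u_3-u_4\neq 0$ the value of $r$ is determined uniquely, and the remaining sum over the $u_i$ factorises, up to a lower-order hyperplane contribution $\ll q^{3/2}$, into four one-variable quadratic Gauss sums, each $\ll q^{1/2}$, contributing $\ll q$ after the normalisation $q^{-1}$. When $u_1+u_2=u_3+u_4$ all $q$ values of $r$ survive and we are left with $\sum e_q(a(\bar u_1^2+\bar u_2^2-\bar u_3^2-\bar u_4^2))$ over the two linear conditions $u_1+u_2=u_3+u_4$ and $b(b_1u_1+b_2u_2-b_3u_3-b_4u_4)+h=0$; solving for $u_3,u_4$ in terms of $U=u_1,V=u_2$ gives linear forms $u_3=\alpha U+\beta V+h'$ and $u_4=(1-\alpha)U+(1-\beta)V-h'$ with $\alpha,\beta$ rational in $\bm b$, so the phase is exactly $a\,F_{\alpha,\beta,h'}(U,V)$.

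At this stage $\Sigma_1(K_1;\bm b,h)\ll q$ follows from Deligne's form of the Riemann Hypothesis over finite fields for the two-variable sum $\sum_{U,V}e_q(aF_{\alpha,\beta,h'}(U,V))$: square-root cancellation in two variables yields $\ll q$ as soon as the attached sheaf is geometrically irreducible and nontrivial, which is guaranteed exactly when $F_{\alpha,\beta,h'}$ is not composed. Lemma \ref{UV-sq} furnishes a set $\mathcal{E}$ with $|\mathcal{E}|\le 14$, crucially independent of $h'$, off which this holds. I would accordingly set $\mathcal{B}^\Delta$ equal to the union of the diagonal $\{(b_1,b_3)=(b_2,b_4)\}$ (where $b_3=b_4$, the linear system degenerates, and the phase separates as $G(U)-G(V)$ so that no cancellation occurs) and of $\{\bm b:(\alpha(\bm b),\beta(\bm b))\in\mathcal{E}\}$. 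The independence of $\mathcal{E}$ from $h'$ is what lets a single $\mathcal{B}^\Delta$ serve for every $h$, as the proposition requires.

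The sum $\Sigma_2(K_2;\bm b,h)$ is treated in parallel, with the linear and inverse-square parts of $S(m,n;q)$ interchanged: opening $K_2(s(r+b_i)^2)=q^{-1/2}\sideset{}{^*}\sum_{u_i}e_q(as(r+b_i)^2\bar u_i^2+bu_i)$ and summing over $s$ now imposes the quadratic constraint $a\big((r+b_1)^2\bar u_1^2+(r+b_2)^2\bar u_2^2-(r+b_3)^2\bar u_3^2-(r+b_4)^2\bar u_4^2\big)+h=0$; the substitution $v_i=(r+b_i)\bar u_i$ linearises it, and summing over $r$ produces the companion linear condition $\bar v_1+\bar v_2-\bar v_3-\bar v_4=0$. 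Equivalently, performing the $r$-sum first yields a quadratic Gauss sum and a Legendre-symbol weight, so that the relevant degeneracy becomes that $F_{\alpha,\beta,h}$ be a perfect square, which is ruled out by Lemma \ref{X-sq} (the square case inside Lemma \ref{UV-sq}). Either route lands on a two-dimensional variety carrying the same function $F_{\alpha,\beta,h}$, and Deligne again gives $\Sigma_2\ll q$ off the same type of exceptional set. Finally $|\mathcal{B}^\Delta|\ll B^2$: the diagonal contributes $\ll B^2$, and since $\alpha,\beta$ are fixed rational functions of $\bm b$, each of the finitely many points of $\mathcal{E}$ imposes two independent relations on $(b_1,b_2,b_3,b_4)$—for instance fixing $b_1,b_2$ in terms of $b_3,b_4$—hence $\ll B^2$ tuples each.

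The main obstacle is the passage from ``square-root cancellation'' to a checkable algebraic criterion. Deligne's theorem delivers $\ll q$ only once one knows that the middle-extension sheaf attached to the phase (respectively to the quadratic character twisted by the additive character) is geometrically irreducible and not geometrically trivial; excluding the trivial and the Kummer-pullback sheaves is precisely equivalent to $F_{\alpha,\beta,h}$ being neither composed nor a square, which is exactly why the elementary but delicate computations of Section \ref{algebraic} are needed. A second, more bookkeeping-heavy difficulty is to confirm that the degenerate loci set aside along the way—zeros of the denominators $r+b_i$ and $u_i$, of a leading coefficient such as $\bar u_1^2+\bar u_2^2-\bar u_3^2-\bar u_4^2$, and the boundary values $r=-b_i$—contribute only $O(q)$ and so do not affect the final estimate.
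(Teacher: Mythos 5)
Your treatment of $\Sigma_1$ follows the paper's proof in all essentials: open $K_1$ via \eqref{Smnq}, execute the $s$- and $r$-sums, split according to whether $u_1+u_2-u_3-u_4$ vanishes, solve the two linear conditions for $u_3,u_4$, and invoke Lemma \ref{UV-sq} together with a deep point-counting input (the paper uses Hooley's bound, Lemma \ref{Hoo}, plus a L\"uroth argument to pass from ``not composed'' to generic irreducibility of the level curves $W_\lambda(t)$ in \eqref{W(t)}; your sheaf-theoretic phrasing is a loose stand-in for the same step). Two defects in that part. First, your claim that the hyperplane contribution is $\ll q^{3/2}$ is unjustified and in fact false: that sum equals $q\sum_r|K_1(r)|^4$, which is genuinely of order $q^2$; what saves the argument is the Weil bound $|K_1(r)|\ll 1$, giving $O(q^2)$ before the $q^{-1}$ normalisation, exactly as in \eqref{uuuu0}. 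Second, your $\mathcal{B}^{\Delta}$ omits the solutions of the dual system obtained by exchanging the roles of $(b_1,b_2)$ and $(b_3,b_4)$: when $b_3=b_4$ but $b_1\neq b_2$ (a tuple outside your exceptional set), your $\alpha,\beta$ are undefined and the reduction must solve for $u_1,u_2$ instead, so the tuples with $(\alpha^{\ast},\beta^{\ast})\in\mathcal{E}$ for the dual parameters must also be excluded; this is why the paper includes the sets $\mathcal{S}_{\alpha,\beta}^{\ast}$.

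The genuine gap is in $\Sigma_2$. After your substitution $v_i=(r+b_i)\bar u_i$, the sum \eqref{Kbh} becomes a \emph{linear} phase $b_1w_1+b_2w_2-b_3w_3-b_4w_4$ summed over the surface $w_1+w_2=w_3+w_4$, $\bar w_1^2+\bar w_2^2-\bar w_3^2-\bar w_4^2=-\overline{ab^2}h$ (this is \eqref{K2develop}); this is \emph{not} ``the same two-dimensional variety carrying $F_{\alpha,\beta,h}$'' as in the first case, because the roles of phase and constraint are interchanged. To apply Lemma \ref{Hoo} one must show that the level sets $W'(t)$ of the linear form on this surface are generically irreducible, and the only route back to Lemma \ref{UV-sq} is the homogeneity trick $u_i\mapsto tu_i$, which identifies $W'(t)$ with the $j=1$ level set at the parameter $-\overline{ab^2}ht^2$. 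This works only when $h\neq 0$: if $h=0$, every $W'(t)$ with $t\neq 0$ is isomorphic to the single variety at parameter $0$, about which the ``not composed'' property (a statement about generic $t$) says nothing. The paper therefore runs a separate argument for $h=0$: writing $s=\eta y^2$ with $\eta\in\{1,\xi\}$, $\xi$ a fixed non-residue, and using the identity $K_2(\eta t^2)=K_{1,\eta}(t)$, one gets
$$
\Sigma_2(K_2;\bm{b},0)=\tfrac12\sum_{\eta\in\{1,\xi\}}\Sigma_1(K_{1,\eta},\bm{b},0)+O(q),
$$
reducing to the case already treated. Your proposal contains nothing that covers $h=0$; the alternative you sketch (performing the $r$-sum first to produce a Gauss sum and a Legendre-symbol weight, then invoking Lemma \ref{X-sq}) is never carried out, and it is not clear it leads to a criterion controlled by the lemmas of Section \ref{algebraic}. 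Since the proposition demands the bound for \emph{every} $h\in\F_q$, and $h=0$ is precisely the value singled out by the completion step \eqref{completion}, this is a missing idea rather than a cosmetic omission.
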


\begin{rmk}
At this point it is important to notice that a simpler argument, based solely on the Weil bound for exponential sums over curves could give an upper bound $\ll q^{3/2}$ in the proposition above. However, even with optimal choices for $A$ and $B$ this would fail to give an improvement of \eqref{onlyWeil}.
\end{rmk}

\subsection{Reduction to a two-dimensional exponential sum}

From this point on, we need to specify the exact form of our $K-$function. The approach is slightly different in the two cases of Proposition \ref{completesum}. 
\begin{itemize}
    \item \textbf{Case $j=1$, $K=K_1$.}
\end{itemize}
We begin by considering the case with $j=1$. We recall that in this case we have
\begin{equation}\label{1stK}
K_1(t)=q^{-1/2}\sideset{}{{}^{\ast}}\sum_{u\rmod q}e_q\left(a{\bar u}^2+btu\right),
\end{equation}
where $a$ and $b$ are coprime with $q$. We use definition \eqref{1stK} in formula \eqref{Kbh} and perform the sum over $s$. There are two separate cases according to whether $u_1+u_2-u_3-u_4\neq 0$ or $u_1+u_2-u_3-u_4= 0$. The first part equals
\begin{equation}\label{uuuu0}
q^{-1}\underset{\substack{u_1,u_2,u_3,u_4\in \F_q\\ u_1+u_2-u_3-u_4\neq 0}}{\sum\sum\sum\sideset{}{{}^{\ast}}\sum}e_q\left( a\left({\bar u_1}^2+{\bar u_2}^2-{\bar u_3}^2-{\bar u_4}^2\right) \right)= \left(q|K_1(0)|^4 - \!\!\sum_{r\rmod q}|K_1(r)|^4\right)\ll q,
\end{equation}
by the Weil bound \eqref{Weil}. We may now focus on the second part, i.e. when $u_1+u_2-u_3-u_4=0$. We see from \eqref{uuuu0} that in the present case, Proposition \ref{completesum} is equivalent to the upper bound
\begin{equation}\label{notsimplified}
\underset{(u_1,u_2,u_3,u_4)\in W(\F_q)}{\sum\sum\sum\sideset{}{^{\ast}}\sum}e_q\Big(a\left({\bar u_1}^2+{\bar u_2}^2-{\bar u_3}^2-{\bar u_4}^2\right)\Big)\ll q,
\end{equation}
where the variety $W$ is given by the equations
$$
\begin{cases}
u_1+u_2-u_3-u_4=0\\
b_1u_1+b_2u_2-b_3u_3-b_4u_4=-\overline{b}h.
\end{cases}
$$
Assume that $\mathcal{B}^{\Delta}$ contains the set $\{\bm{b}\in \mathcal{B};\,(b_1,b_3)=(b_2,b_4)\}$. Then for every $\bm{b}\in \mathcal{B}\backslash \mathcal{B}^{\Delta}$ we either have $b_1\neq b_2$ or $b_3\neq b_4$. We assume that the second possibility holds. The other case is analogous.
Let
$$
\alpha=\frac{b_1-b_4}{b_3-b_4},\,\beta=\frac{b_2-b_4}{b_3-b_4},\,\lambda=\frac{{\bar b}h}{b_3-b_4}.
$$
Thus we can write the exponential sum on the left-hand side of \eqref{notsimplified} as
$$
\underset{u,v\rmod q}{\sum\sideset{}{^{\ast}}\sum}e_q\left(a\left({\overline{u}}^2+{\overline{v}}^2-{\overline{(\ell(u,v)-\lambda)}}^2-{\overline{(\tilde{\ell}(u,v)+\lambda)}}^2\right)\right),
$$
where
$$
\ell(u,v)=\alpha u+\beta v\text{ and }\tilde{\ell}(u,v)=(1-\alpha)u+(1-\beta)v. 
$$
By Lemma \ref{Hoo} below, \eqref{notsimplified} will follow if, for instance, we can prove that the variety

\begin{equation}\label{W(t)}
W_{\lambda}(t):=\left\{(u,v)\in \overline{\F_q}^2;\; \left({\overline{u}}^2+{\overline{v}}^2-{\overline{(\ell(u,v)-\lambda)}}^2-{\overline{(\tilde{\ell}(u,v)+\lambda)}}^2\right)=t \right\}
\end{equation}
is an irreducible curve for all but finitely many $t\in \overline{\F_q}$. We argue that for a suitable choice of the set $\mathcal{B}^{\Delta}$ this is implied by Lemma \ref{UV-sq}. Indeed, let $\mathcal{E}$ be the finite set of exceptions given by Lemma \ref{UV-sq}. If
$\mathcal{B}^{\Delta}$ contains all the solutions of the linear system

\begin{equation}\label{system}
\begin{cases}
b_1=\alpha(b_3-b_4)+b_4\\
b_2=\beta(b_3-b_4)+b_4,
\end{cases}
\end{equation}
for every $\alpha,\beta\in \mathcal{E}$, then the rational function
$$
F(U,V):=\frac{1}{U^2}+\frac{1}{V^2}-\frac{1}{(\ell(U,V) +\lambda)^2}-\frac{1}{(\tilde{\ell}(U,V) - h)^2}
$$
can not be written as $Q\circ P$, where $P$ is a rational function in two variables and $Q$ is a rational function in two variables and $P$ is a rational function in one variable which is not a fractional linear transformation. 

We argue that this implies that $W_{\lambda}(t)$ is an irreducible curve for all but finitely many $t\in\F_q$. Indeed, an argument based on L\"uroth's Theorem implies the desired result (see \citep[Proposition 2.1]{fouvry1998certaines} for details). We conclude this case by invoking the following result of Hooley (see \citep[Theorem 5]{hooley1980exponential}):
\begin{rmk}
Recall that we are considering the case where $b_3\neq b_4$. In order to take care the of the case where $b_1\neq b_2$ we must also ask that $\mathcal{B}^{\Delta}$ contains all the solutions of the dual system obtained from \eqref{system} by replacing the roles of $(b_1,b_2)$ and $(b_3,b_4)$. Note that this at most doubles the size of the set of exceptions $\mathcal{B}^{\Delta}$.
\end{rmk}
\begin{lem}\label{Hoo}
Let $q$ be a prime number. Let $f(X_1,X_2,X_3)$ and $g(X_1,X_2,X_3)$ be two rational functions over $\mathbb{F}_q$ such that
\begin{enumerate}[i)]
\item The variety $W(t)$ defined by the equation $f(X_1,X_2,X_3)=t$ and $g(X_1,X_2,X_3)=0$ is generically an absolutely irreducible curve.

\item For every specialisation of $t$ in $\overline{F_q}$, $W(t)$ is a (possibly reducible) curve.
\end{enumerate}
Then, we have the upper bound
$$
\sideset{}{{}^{\ast}}\sum_{\substack{X_1,X_2,X_3\rmod{q}\\g(X_1,X_2,X_3)\equiv 0 \rmod q}}e_q\left(f(X_1,X_2,X_3)\right)\ll q,
$$
where the implied constant depends at most on the degrees of the rational fractions $f$ and $g$.
\end{lem}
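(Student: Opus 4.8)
The statement is essentially a quantitative estimate for exponential sums attached to a rational function restricted to an affine curve, and the natural strategy is to reduce it to the Riemann hypothesis for curves over finite fields (the Weil bound), exactly as in Hooley's original treatment \citep{hooley1980exponential}. The plan is as follows. First I would use the auxiliary equation $g=0$ to realize the summation set as the $\F_q$-points of an affine curve $\mathcal{C}$, and view $f$ as a rational function on $\mathcal{C}$; the sum in question is then $\sideset{}{^\ast}\sum_{P\in \mathcal{C}(\F_q)} e_q(f(P))$, where the $\ast$ excludes the poles of $f$ and the points where the denominators of $f$ or $g$ vanish.

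The key device is the \emph{Artin--Schreier} construction. One forms the Artin--Schreier covering $\mathcal{D}\to\mathcal{C}$ defined locally by $z^q-z=f$, so that the exponential sum becomes, up to the excluded points, a sum of the corresponding additive character over the covering, and square-root cancellation for such a sum is a consequence of the Weil bound applied to $\mathcal{D}$ (equivalently, to the $L$-function of the Artin--Schreier sheaf on $\mathcal{C}$). The bound one gets is $\ll q^{1/2}$ times the genus (or more precisely the degree of the different of the covering), \emph{provided} the relevant curve is absolutely irreducible and $f$ is genuinely non-constant on it (so that the character sum is not trivially of size $q$). This is where hypothesis (i) enters: generic absolute irreducibility of $W(t)$, via the fibration $f\colon \mathcal{C}\to \mathbb{A}^1$, guarantees that $f$ does not collapse to a constant on a component and controls the geometric monodromy, so that no single character value dominates. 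The genus and the number of excluded points are bounded purely in terms of $\deg f$ and $\deg g$, which yields the claimed uniformity of the implied constant.

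The one subtle point, and the reason hypothesis (ii) is imposed, is that the naive Weil bound on $\mathcal{C}$ is of the shape $\ll q^{1/2}$ only when $\mathcal{C}$ has the expected dimension and no component on which $f$ becomes constant. The content of (ii) — that $W(t)$ remains a genuine curve (dimension $1$, no whole-plane component) for \emph{every} specialization of $t$, not merely the generic one — rules out the degenerate situation in which, for some value of $t$, the fiber $f^{-1}(t)\cap \mathcal{C}$ acquires a positive-dimensional piece that would contribute a full main term of size $q$ to a single character and thereby push the total up to $q^{3/2}$. Under (i) and (ii) together, the sum over the curve is, by a Lefschetz/trace-function count, $q$ (the count of $\F_q$-points of the one-dimensional variety $g=0$) times an average of additive characters that exhibits cancellation, producing the final bound $\ll q$ rather than $q\cdot q^{1/2}$. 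The main obstacle in making this rigorous is precisely the passage from generic irreducibility to a uniform estimate valid for all specializations: one must invoke the full strength of the results in \citep{hooley1980exponential} (or, in more modern language, the uniform bounds for sums of trace functions of \'etale sheaves of bounded conductor) to convert the geometric hypotheses (i) and (ii) into the analytic inequality. Since the lemma is stated as a citation to Hooley, the cleanest route is to verify that our $f$ and $g$ satisfy exactly the hypotheses of \citep[Theorem 5]{hooley1980exponential} and appeal directly to that theorem.
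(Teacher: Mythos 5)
Your closing move --- checking the hypotheses and appealing directly to \citep[Theorem 5]{hooley1980exponential} --- is in fact the entirety of what the paper does: Lemma \ref{Hoo} is stated there as a quoted result of Hooley, with no proof supplied. So your bottom line lands in the same place as the paper.

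However, the mechanism you sketch around that citation is wrong, and would not yield the lemma if carried out. The summation set $\{g=0\}$ in $\mathbb{A}^3$ is a \emph{surface}, not a curve: the sum has roughly $q^2$ terms, and the bound $\ll q$ is full square-root cancellation in two dimensions. (You repeatedly call $\{g=0\}$ ``an affine curve'' and ``the one-dimensional variety''; the curves in the statement are the fibres $W(t)=\{f=t\}\cap\{g=0\}$, whose union over $t$ is the summation set.) Consequently, an Artin--Schreier covering of a curve together with RH for curves is not the right tool, and the bound it would give --- $q^{1/2}$ times a genus constant --- is not even of the right shape. What RH for curves genuinely gives is the fibre-by-fibre estimate: writing the sum as $\sum_{t}e_q(t)N(t)$ with $N(t)=\#W(t)(\F_q)$, hypotheses (i) and (ii) plus Weil yield $N(t)=q+O(q^{1/2})$ for all but $O(1)$ values of $t$, and $N(t)=O(q)$ always; since $\sum_t e_q(t)=0$ the main terms cancel, but the error terms give only $O(q^{3/2})$. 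This is precisely the point of the Remark following Proposition \ref{completesum} in the paper: an argument based solely on the Weil bound for sums over curves gives $\ll q^{3/2}$, which is useless for the application. The entire content of Hooley's theorem is the further saving from $q^{3/2}$ to $q$, which requires cancellation in $\sum_t e_q(t)\left(N(t)-q\right)$, that is, among the Frobenius eigenvalues of the fibres as $t$ varies --- Deligne-type input on the whole family, not RH for curves; and this is also where hypotheses (i) and (ii) do their real work. As written, your sketch establishes only the bound the paper explicitly declares insufficient; the lemma itself should simply be quoted from Hooley, as the paper does.
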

Lemmas \ref{UV-sq} and \ref{Hoo} now imply the upper bound \eqref{notsimplified} for a suitable choice of $\mathcal{B}^{\Delta}$.
\begin{itemize}
    \item \textbf{Case $j=2$, $K=K_2$.}
\end{itemize}
We now turn our attention to the case relevant to Theorem \ref{expsum1}. Let
\begin{equation}\label{2ndK}
K_2(t):=q^{-1/2}\sideset{}{{}^{\ast}}\sum_{u\rmod q}e_q\left(at\bar{u}^2+bu\right),
\end{equation}
where $a$ and $b$ are coprime to $q$.

The first thing we notice is that if $t\neq0$, then by a linear change of variables, we have that
$$
K_2(st^2)=q^{-1/2}\sideset{}{{}^{\ast}}\sum_{u\rmod q}e_q\left(ab^2s\bar{u}^2+tu\right).
$$
By using it in \eqref{Kbh} and considering the cases where $r+b_i=0$ separately, we see that
\begin{equation}\label{K2develop}
\Sigma_2(K_2,\bm{b},h)=\underset{\substack{u_1,u_2,u_3,u_4\rmod q\\\ (u_1,u_2,u_3,u_4)\in V(\F_q)}}{\sum\sum\sum\sideset{}{^{\ast}}\sum}e_q\left(b_1u_1+b_2u_2-b_3u_3-b_4u_4)\right) + O(q),
\end{equation}
where $V(\F_q)$ is the surface defined by the equations
$$
\begin{cases}
u_1+u_2-u_3-u_4=0\\ \overline{u_1}^2+\overline{u_2}^2-\overline{u_3}^2-\overline{u_4}^2=-{\overline{ab^2}}h.
\end{cases}
$$

The situation here resembles that of \citep[Theorem 1.1]{Fouvry2001general}, where very general exponential sums are considered. A direct application of their result would give a version of Proposition \ref{completesum} with the weaker bound $|\mathcal{B}^{\Delta}|\ll B^3$ for the set of exceptions.

It should still be possible to obtain Theorem \ref{expsum1} from this weaker bound but some extra work would be necessary.

We adopt a different, more elementary approach reducing to the previous case (\textit{i.e.} $K=K_1$ and $j=1$) that we discuss now.

As in the previous case, we can suppose that $b_3\neq b_4$, the case where $b_1\neq b_2$ being analogous. This allows us to write the sum on right-hand side of \eqref{K2develop} as 
$$
\underset{\substack{u_1,u_2,u_3\rmod q\\\ \overline{u_1}^2+\overline{u_2}^2-\overline{u_3}^2-\overline{(u_1+u_2-u_3)}^2\equiv-{\overline{ab^2}}h\rmod q}}{\sum\sum\sideset{}{^{\ast}}\sum}\!\!\!\!\!\!\!\!\!\!\!\!\!\!\!\!\!\!\!\!\!\!\!\!\!e_q\left((b_1-b_4)u_1+(b_2-b_4)u_2-(b_3-b_4)u_3)\right).
$$
We need to prove that $\Sigma_2(K_2,\bm{b},h)\ll q$. By arguing exactly as before, it suffices to prove that for almost every $t\in\overline{\F_q}$, the variety $W'(t)$ defined by
$$
\begin{cases}
(b_1-b_4)u_1+(b_2-b_4)u_2-(b_3-b_4)u_3=t\\ \overline{u_1}^2+\overline{u_2}^2-\overline{u_3}^2-\overline{(u_1+u_2+u_3)}^2=-{\overline{ab^2}}h.
\end{cases}
$$
is an irreducible curve. Suppose $t\neq 0$. In this case, by making the change of variables $u_i\mapsto tu_i$, $i=1,2,3$ we see that $W'(t)$ is isomorphic to the variety $W''(-{\overline{ab^2}}ht^2)$, where $W''(t)$ is given by
$$
\begin{cases}
(b_1-b_4)u_1+(b_2-b_4)u_2-(b_3-b_4)u_3=1\\ \overline{u_1}^2+\overline{u_2}^2-\overline{u_3}^2-\overline{(u_1+u_2+u_3)}^2=t.
\end{cases}
$$
Let
$$
\alpha=\frac{b_1-b_4}{b_3-b_4},\,\beta=\frac{b_2-b_4}{b_3-b_4},\,\lambda=\frac{1}{b_3-b_4}.
$$
Then by forgetting variable $u_3$, we see that $W''(t)$ is isomorphic to $W_{\lambda}(t)$, where $W_{\lambda}(t)$ is the variety considered in the previous case and given by \eqref{W(t)}. But we already proved that, for every $\bm{b}\in \mathcal{B}\backslash\mathcal{B}^{\Delta}$, $W_{\lambda}(t)$ is an irreducible curve over $\overline{\F_q}$ for all but finitely many $t$. Thus, the inequality
\begin{equation}\label{desired}
\Sigma_2(K_2,\bm{b},h)\ll q
\end{equation}
also follows from Lemma \ref{Hoo} in this case. At least when $h\neq 0$.

Finally, if $h=0$, our goal is to modify the sum $\Sigma_2(K_2,\bm{b},h)$ by a change of variables and recover a case that was already considered before. We start by fixing $\xi$ any non-quadratic residue modulo $q$. Notice that for every $x\in \Z/q\Z$ there exists exactly two solutions to the equation
$$
x=\eta y^2,
$$
with $\eta\in\{1,\xi\}$ and $y\in \Z/q\Z$. With that in mind, we see that
$$
\Sigma_2(K_2;\bm{b},0)=\frac12\sum_{\eta\in\{1,\xi\}}\underset{r,s\rmod q}{\sum\sum}\prod_{i=1}^2K_2(\eta s^2(r+b_i)^2)\overline{K_2(\eta s^2(r+b_{i+2})^2)}.
$$
We see from definition \eqref{2ndK}, that whenever $t\neq 0$, we have the identity
$$
K_2(\eta t^2)=K_{1,\eta}(t),
$$
where $K_{1,\eta}$ is given by the right-hand-side of \eqref{1stK} with $a$ replaced by $\eta a$.
By treating the cases where $s(r+b_i)=0$ separately, we have that
\begin{align*}
\Sigma_2(K_2;\bm{b},0)&=\frac12\sum_{\eta\in\{1,\xi\}}\underset{r,s\rmod q}{\sum\sum}\prod_{i=1}^2K_{1,\eta}(s(r+b_i))\overline{K_{1,\eta}(s(r+b_{i+2}))}+O(q)\\
&=\frac12\sum_{\eta\in\{1,\xi\}}\Sigma_1(K_{1,\eta},\bm{b},0)+O(q).
\end{align*}
Therefore, \eqref{desired} for $h = 0$ follows from the first case considered above. This concludes the proof of Proposition \ref{completesum} provided that we can prove that we can impose $|\mathcal{B}^{\Delta}|\ll B^2$.

\subsection{The choice of $\mathcal{B}^{\Delta}$ and proof of Theorems \ref{expsum1} and \ref{expsum2}}

Let 
$$
\mathcal{D}=\left\{\bm{b}\in\mathcal{B};\; (b_1,b_3)=(b_2,b_4)\right\}.
$$
Let $\mathcal{E}\in \overline{\F_q}^2$ be the finite set given by Lemma \ref{UV-sq}. Then for each $(\alpha,\beta)\in \mathcal{E}\cap \F_q^2$, let $\mathcal{S}_{\alpha,\beta}$ be the set of solutions $\bm{b}\in\mathcal{B}$ of the linear system \eqref{system} and $\mathcal{S}_{\alpha,\beta}^{\ast}$ be the set of solutions to the dual system, obtained by replacing the roles of $(b_1,b_2)$ and $(b_3,b_4)$. Notice that
$$
\left|\mathcal{D}\right|=\left|\mathcal{S}_{\alpha,\beta}\right|=\left|\mathcal{S}_{\alpha,\beta}^{\ast}\right|=B^2.
$$
Finally, we put
$$
\mathcal{B}^{\Delta} = \mathcal{D}\cup \displaystyle\bigcup_{(\alpha,\beta)\in \mathcal{E}\cap \F_q^2} \left(\mathcal{S}_{\alpha,\beta} \cup \mathcal{S}_{\alpha,\beta}^{\ast}\right).
$$
Notice that this choice clearly satisfies the inequality 
$$
\mathcal{B}^{\Delta}\leq  30B^2,
$$
As we saw this was the last missing part in the proof of Proposition \ref{completesum}.

We must now put together the bounds for $\Sigma_j(K_j,\bm{b})$ in the cases where $\bm{b}\in \mathcal{B}^{\Delta}$ and $\bm{b}\in\mathcal{B}\backslash \mathcal{B}^{\Delta}$. Combining \eqref{diagonal}, \eqref{completion} and Proposition \ref{completesum}, we obtain
$$
\sum_{\bm{b}\in\mathcal{B}}\eta(b)\Sigma(K_j,\bm{b})\ll A^jB^2Mq+ B^4q\log q.
$$
The inequality \eqref{Holder} now gives
\begin{equation}\label{almostthere}
AB\times S_{K_j,j}\ll q^{\epsilon}(AN)^{3/4}(\|\bm{\alpha}\|_1\|\bm{\alpha}\|_2)^{1/2}\left( A^jB^2Mq+ B^4q \right)^{1/4}.
\end{equation}
We make the choices
\begin{equation}\label{choices}
A=N^{\frac{2}{j+2}}M^{-\frac{1}{j+2}},\,\,B=N^{\frac{j}{j+2}}M^{\frac{1}{j+2}},
\end{equation}
so that the conditions \eqref{necessary} become equivalent to
$$
M\leq N^2\text{ and }MN^{j}\leq q^{\frac{j+2}{2}},
$$
which are part of the hypotheses in Theorems \ref{expsum1} and \ref{expsum2}. With the choices as in \eqref{choices}, Inequality \eqref{almostthere} becomes
$$
S_{K_j,j}\ll q^{\epsilon}(\|\bm{\alpha}\|_1\|\bm{\alpha}\|_2)^{1/2}M^{1/4}N\left(\frac{q^{j+2}}{M^{j+1}N^{j+4}}\right)^{\frac{1}{4(j+2)}}\,\,(j=1,2),
$$
which proves both Theorem \ref{expsum1} and Theorem \ref{expsum2}.
\section{Proof of Theorem \ref{2/3}}\label{proofof23}

Let $q$ be a prime number, let $a$ be coprime with $q$ and $X\geq q$. We consider $E=E(X,q,a)$ given by

$$
E:=\sum_{\substack{n\leq X\\n\equiv a \rmod q}}\mu^2(n) - \frac{1}{\varphi(q)}\sum_{\substack{n\leq X\\(n,q)=1}}\mu^2(n).
$$
Our goal is to prove that for every $A>0$, we have the inequality $E\ll X/q(\log X)^A$
uniformly for $q\leq X^{13/19-\epsilon}$, where the iéplied constant depends at most on $\epsilon$ and $A$.

We use the classical identity
\begin{equation}\label{mu-decomp}
\mu^2(n)=\underset{\substack{n_1,n_2\geq 1\\n_1n_2^2=n}}{\sum\sum}\mu(n_2),
\end{equation}
giving
$$
E=\sum_{n\leq X^{1/2}}\mu(n)\Delta(X/n^2,q,a{\bar n}^2),
$$
where for every $x\geq 1$, $q$ integer and $a\in \Z/q\Z$,
$$
\Delta(x,q,a):=\sum_{\substack{m\leq x\\m\equiv a \rmod q}}1 - \frac{1}{\varphi(q)}\sum_{\substack{m\leq x\\(m,q)=1}}1.
$$
It is clear that for any $x,q,a$, we have
$$
\Delta(x,q,a) \ll 1.
$$
Let $N_0$ be a parameter to be chosen optimally later such that $1\leq N_0\leq X^{1/2}$. The previous inequality shows us that
\begin{equation}\label{N0-out}
E=\sum_{N_0< n\leq X^{1/2}}\mu(n)\Delta(X/n^2,q,a{\bar n}^2) + O(N_0).
\end{equation}
Notice that
\begin{align*}
\frac{1}{\varphi(q)}\sum_{N_0<n\leq X^{1/2}}\mu(n)
\sum_{m\leq X/n^2}1\ll\frac{X}{N_0q},
\end{align*}
since $q$ is a prime number. This and \eqref{N0-out} combined give
\begin{equation}\label{beforedyadic}
|E|\leq \sum_{N_0< n\leq X^{1/2}}\sum_{\substack{m\leq X/n^2\\m\equiv a{\bar n}^2\rmod q}}1 + O\left(N_0+\frac{X}{N_0 q}\right).    
\end{equation}

We now proceed by means of a dyadic decomposition. Let $V$ be a infinitely differentiable function defined on the real line vanishing outside $[1/2,4]$ and identical to $1$ in $[1,2]$. If we put

\begin{equation}\label{SV}
S_V(M,N;q,a)=\underset{\substack{m,n\\ mn^2\equiv a\rmod{q}}}{\sum\sum}V\left(\frac mM\right)V\left(\frac nN\right),
\end{equation}
we deduce from \eqref{beforedyadic} the upper bound
$$
E\ll (\log X)^2\cdot \sup_{M,N}S_V\left(M,N;q,a\right) + N_0 + \frac{X}{N_0 q},
$$
where the supremum is taken over all $M$ and $N$ such that
\begin{equation}\label{cond1}
M,N\geq 1,\, N_0\leq N\leq 2X^{1/2},\,MN^2\leq 8X.
\end{equation}

Let $M_0\geq 1$ be a parameter to be chosen optimally later. Suppose that $M\leq M_0$ and that $M,N$ satisfy the conditions \eqref{cond1}. Then, by the crude estimate 
$$
\sum_{n \equiv \alpha\rmod{q}}V(\frac{n}{N})\ll \left(\frac{N}{q} + 1\right),
$$
we see that
\begin{align*}
S_V(M,N;q,a)\ll M\left(\frac{N}{q}+1\right)\\
\ll \frac{X}{N_0q} + M_0.
\end{align*}

Suppose now that $MN^2\leq q^{\frac{101}{100}}$. In this case, we write $u=mn^2$ so that we obtain the inequality
$$
S_V(M,N;q,a)\ll \sum_{\substack{u\leq 8q^{\frac{101}{100}}\\ u\equiv a\rmod{q}}}d(u)\ll q^{\frac{1}{100} + \epsilon},
$$
where we used the classical bound $d(n)\ll n^{\epsilon}$ for every $\epsilon>0$. Putting everything together we see that
\begin{equation}\label{E-sup}
E\ll (\log X)^2\sup_{M,N}S_V\left(M,N;q,a\right) +q^{\frac1{100}+\epsilon} + M_0 + N_0 + \frac{X}{N_0 q},
\end{equation}
where now the supremum is taken over all $M$ and $N$ satisfying
\begin{equation}\label{cond2}
M\geq M_0,\, N\geq N_0,\,q^{\frac{101}{100}}\leq MN^2\leq 8X.
\end{equation}

In the next subsection, we will use Theorem \ref{expsum1} to estimate $S_V(M,N;q,a)$, but before doing that, we need some preparation. Indeed, we use Poisson summation in both variables and than we separate the contribution coming from the main terms.

\subsection{Double Poisson summation}

Let $q$ be a prime number and $a$ be coprime with $q$. Let $M$ and $N$ be real numbers satisfying \eqref{cond2}. Let $S_V(M,N;q,a)$ be given by \eqref{SV}, then by applying Poisson summation in both variables, we get
$$
S_V(M,N;q,a)=\frac{MN}{q^2}\underset{m,n}{\sum\sum}\widehat{V}\left(\frac{mM}{q}\right)\widehat{V}\left(\frac{nN}{q}\right)\sideset{}{^{\ast}}\sum_{u\rmod q}e_q\left(m{\bar u}^2+anu\right).
$$
We first notice that since $V$ is smooth, integrating by parts gives the inequalities
\begin{equation}\label{RapDecay}
\widehat{V}(x)\ll x^{-j}, \;x\in \R,\, j=0,1,2,\ldots
\end{equation}
Hence, it follows that for every $\epsilon>0$, the contribution of the terms where $|m|>q^{1+\epsilon}M^{-1}$ or $|n|>q^{1+\epsilon}N^{-1}$ is negligible. For instance, we have
\begin{equation}\label{useRapDecay}
S_V(M,N;q,a)=\frac{MN}{q^2}\underset{\substack{|m|\leq q^{1+\epsilon}M^{-1}\\ |n|\leq q^{1+\epsilon}N^{-1} }}{\sum\sum}\widehat{V}\left(\frac{mM}{q}\right)\widehat{V}\left(\frac{nN}{q}\right)S(m,an;q) + O(q^{-200}),
\end{equation}
where $S(m,an;q)$ is as defined in \eqref{Smnq}.

The contribution of the terms where $mn=0$ can also be estimated easily by directly computing the exponential sums and using the estimates \eqref{RapDecay} with $j=0$. Indeed, if $0<|m|,|n|<q$, we have the following identities:

\begin{equation}\label{calc}
\begin{cases}
|S(m,0;q)|=q^{1/2},\\
S(0,an;q)=-1,\\
S(0,0;q)=q-1,
\end{cases}
\end{equation}
since the first of this sums is a Gauss sum, the second one is a Ramanujan sum and the last one is a trivial sum.

Suppose $\epsilon$ satisfies $M_0, N_0\geq q^{\epsilon}$. We see from \eqref{calc} that one has the upper bound
$$
\underset{\substack{|m|\leq q^{1+\epsilon}M^{-1},\, |n|\leq q^{1+\epsilon}N^{-1} \\ mn=0}}{\sum\sum}\widehat{V}\left(\frac{mM}{q}\right)\widehat{V}\left(\frac{nN}{q}\right)S(m,an;q) \ll   q^{3/2+\epsilon}M^{-1} + q^{1+\epsilon}N^{-1} + q.
$$
By \eqref{useRapDecay}, we see that
\begin{equation}\label{SV=T}
S_V(M,N;q,a)=\frac{MN}{q^2}\mathcal{T} + O\left(q^{\epsilon}\left(\frac{N}{q^{1/2}} + \frac{MN}{q}\right)\right),
\end{equation}
where
\begin{align}\label{T=Teps}
\mathcal{T}&=\sum_{\eps_1=\pm 1}\sum_{\eps_2=\pm 1}\sum_{m=1}^{q^{1+\epsilon}M^{-1}}\sum_{n=1}^{q^{1+\epsilon}N^{-1}}\widehat{V}\left(\frac{\eps_1mM}{q}\right)\widehat{V}\left(\frac{\eps_2nN}{q}\right)S(\eps_1m,\eps_2an;q)\\
&=:\sum_{\eps_1=\pm 1}\sum_{\eps_2=\pm 1}\mathcal{T}_{\eps_1,\eps_2},\notag
\end{align}
say. We must now estimate $\mathcal{T}_{\eps_1,\eps_2}$. By integration by parts and the trivial upper bounds
$$
\widehat{V}(x),\,\widehat{V}'(x)\ll 1,
$$
we deduce the inequality
\begin{equation}\label{Teps}
\mathcal{T}_{\eps_1,\eps_2}\ll q^{2\epsilon}\sup_{M^{\ast},N^{\ast}}\sum_{m=1}^{M^{\ast}}\sum_{n=1}^{N^{\ast}}S(\eps_1m,\eps_2an;q),
\end{equation}
where the supremum is taken over all $M^{\ast}$ and $N^{\ast}$ such that
\begin{equation}\label{cond*}
1\leq M^{\ast}\leq q^{1+\epsilon}M^{-1},\,\, 1\leq N^{\ast}\leq q^{1+\epsilon}N^{-1}.
\end{equation}
We are now ready to use Theorem \ref{expsum1}. We prove the following

\begin{prop}\label{majoration}
Let $q$ be a prime number. Let $a$ and $b$ be coprime with $q$. Let $M, N\geq 1$ be such that
$$
M,N<q,\,MN^2< q^2.
$$
Let $S(m,n;q)$ be as in \eqref{Smnq}. Then for any $\epsilon>0$, we have
$$
\sum_{m\leq M}\sum_{n\leq N}S(am,bn;q)\ll MNq^{1/2+\epsilon}\left(\frac{M^3N^6}{q^4}\right)^{-1/16} + M^{3/2}q^{1/2+\epsilon},
$$
where the implied constant only depends on $\epsilon$.
\end{prop}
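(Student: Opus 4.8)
The plan is to reduce the double sum directly to Theorem~\ref{expsum1} by a single change of variable in the inner exponential sum, and to dispose of the range $M>N^2$ (which is not covered by the hypotheses of Theorem~\ref{expsum1}) by the trivial Weil bound.

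First I would move the dependence on $n$ inside the ``first slot'' of the Kloosterman-type sum. Since $1\le n\le N<q$ and $q$ is prime, $n$ is invertible modulo $q$, so substituting $u\mapsto \bar n u$ in $S(am,bn;q)=\sideset{}{^{\ast}}\sum_{u}e_q(am\bar u^2+bnu)$ is a bijection of the nonzero residues, and it sends $\bar u^2\mapsto n^2\bar u^2$ and $bnu\mapsto bu$. Hence
$$
S(am,bn;q)=\sideset{}{^{\ast}}\sum_{u}e_q\big(amn^2\bar u^2+bu\big)=S(amn^2,b;q)=q^{1/2}K_2(mn^2),
$$
by the definition \eqref{KK} of $K_2$. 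Therefore $\sum_{m\le M}\sum_{n\le N}S(am,bn;q)=q^{1/2}\sum_{m\le M}\sum_{n\le N}K_2(mn^2)$, which is exactly the shape estimated by Theorem~\ref{expsum1}.

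Next, in the range $1\le M\le N^2$ I would apply Theorem~\ref{expsum1} with $\alpha_m=1$ for all $m\le M$ and with $\mathcal N=\{1,\dots,\lfloor N\rfloor\}\subset[1,q-1]$; the remaining hypotheses $N<q$ and $MN^2<q^2$ are part of the proposition. For the constant sequence one has $\|\bm\alpha\|_1=M$ and $\|\bm\alpha\|_2=M^{1/2}$, so the prefactor collapses to $\|\bm\alpha\|_1^{1/2}\|\bm\alpha\|_2^{1/2}M^{1/4}=M^{1/2}\cdot M^{1/4}\cdot M^{1/4}=M$. Multiplying the resulting bound by the factor $q^{1/2}$ from the change of variable produces precisely the first term $MNq^{1/2+\epsilon}(M^3N^6/q^4)^{-1/16}$.

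Finally, for the complementary range $M>N^2$ I would argue trivially: since $am$ is coprime to $q$ for every $1\le m\le M<q$, the Weil bound \eqref{Weil} gives $|S(am,bn;q)|\le 3q^{1/2}$, whence the double sum is $\ll MNq^{1/2}$, and using $N<M^{1/2}$ this is $\ll M^{3/2}q^{1/2}$, which is absorbed by the second term. I expect the only points needing care to be the verification of the change of variable and the bookkeeping around the fact that Theorem~\ref{expsum1} assumes $M\le N^2$ whereas the proposition does not; this mismatch is exactly what forces the elementary treatment of the range $M>N^2$, while everything else is a direct specialization of Theorem~\ref{expsum1} to $\alpha_m=1$.
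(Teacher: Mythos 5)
Your proposal is correct and follows essentially the same route as the paper: the identity $S(am,bn;q)=S(amn^2,b;q)$ (valid since $n$ is invertible mod $q$) reduces the case $M\leq N^2$ to Theorem~\ref{expsum1} with $\alpha_m=1$, while the complementary range $M>N^2$ is handled by the Weil bound \eqref{Weil} together with $N<M^{1/2}$. Your write-up merely makes explicit the change of variables $u\mapsto \bar n u$ and the bookkeeping $\|\bm\alpha\|_1^{1/2}\|\bm\alpha\|_2^{1/2}M^{1/4}=M$, both of which the paper leaves implicit.
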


\begin{proof}
There are two cases to consider. First, if $M\leq N^2$, the proposition follows from Theorem \ref{expsum1} with $\alpha_m=1$ for every $1\leq m\leq M$, $\mathcal{N}=[1,N]$ and
$$
K(t)=\frac{1}{q^{1/2}}S(at,b;q).
$$
Indeed, for $1\leq n<q$, we have
$$
S(am,bn;q)=S(amn^2,b;q).
$$
On the other hand, if $M>N^2$, a simple application of the Weil bound \eqref{Weil} gives
$$
\sum_{m\leq M}\sum_{n\leq N}S(m,n;q)\ll MNq^{1/2} \leq  M^{3/2}q^{1/2}.
$$
This concludes the proof of the proposition.
\end{proof}
We want to apply this proposition to the right-hand side of \eqref{Teps}. In order to do so, we need to be sure that any $M^{\ast},\, N^{\ast}$ satisfying \eqref{cond*} will also satisfy the conditions of Proposition \ref{majoration}. It suffices to have
$$
M,N>q^{\epsilon}, MN^2> q^{1+3\epsilon}.
$$
By \eqref{cond2}, this follows from the assumptions
\begin{equation}\label{lastcond}
M_0,N_0>q^{\epsilon}\text{ and }\epsilon<\frac{1}{300}.
\end{equation}
Assume \eqref{lastcond}. Then Proposition \ref{majoration} applied to the right-hand side of \eqref{Teps} gives
$$
\mathcal{T}_{\eps_1,\eps_2}\ll \frac{q^{5/2+\epsilon}}{MN}\left(\frac{M^3N^6}{q^5}\right)^{1/16} + \frac{q^{2}}{M^{3/2}}.
$$
This together with \eqref{SV=T} and \eqref{T=Teps} gives
$$
S_V(M,N;q,a)\ll q^{\epsilon}\left((MN^2)^{3/16}q^{3/16} + NM^{-1/2} + Nq^{-1/2} + MNq^{-1}\right).
$$
We now see from \eqref{E-sup} and \eqref{cond2} that we have the inequality
$$
E\ll q^{\epsilon}\left(X^{3/16}q^{3/16} + \frac{X^{1/2}}{M_0} + \frac{X^{1/2}}{M_0^{1/2}q^{1/2}} + \frac{X}{N_0q} + q^{\frac1{100}} + M_0 + N_0\right).
$$
We make the choices (clearly satisfying \eqref{lastcond})
$$
M_0=X^{1/4}\text{ and }N_0=(X/q)^{1/2},
$$
thus obtaining
$$
E\ll q^{\epsilon}\left(X^{3/16}q^{3/16} + X^{1/4} + X^{1/2}q^{-1/2} \right).
$$
It is now easy to see that for every $\epsilon,A>0$ and whenever $q\leq X^{13/19-\epsilon}$, then 
$$
E\ll \frac{X}{q(\log X)^A}.
$$
We are now done proving Theorem \ref{2/3}.

\section*{Acknowledgements}
It is a pleasure to thank \'Etienne Fouvry and Philippe Michel for very useful discussions on the subject of this article.

\bibliographystyle{amsplain}
\bibliography{references}

\end{document}